\tikzstyle{vertex}=[circle, draw, inner sep=0pt, minimum size=4pt]
\tikzstyle{vtx}=[circle, draw, inner sep=0pt, minimum size=8pt]
\definecolor{goldenpoppy}{rgb}{0.99, 0.76, 0.0}
\definecolor{darkgreen}{cmyk}{.9,0,.9,.2}
\definecolor{midgray}{gray}{0.60}
\definecolor{lightgray}{gray}{0.90}
\definecolor{lmgray}{gray}{0.70}
\def\ZZ{{\mathbb Z}}
\newcommand{\addresseshere}{%
  \enddoc@text\let\enddoc@text\relax
}
\theoremstyle{plain}
\newtheorem{theorem}{Theorem}[section]
\newtheorem{lemma}{Lemma}[section] 
\newtheorem{corollary}{Corollary}[section]
\theoremstyle{definition}
\newtheorem{definition}{Definition}[section]
\newtheorem{remark}{Remark}[section]
\newtheorem{question}{Question}[section]
\title{On $(t,r)$ broadcast domination of certain grid graphs}
\author{Natasha Crepeau}
\address{Department of Mathematics, University of Washington, United States}
\email{\textcolor{blue}{\href{mailto:ncrepeau@uw.edu}{ncrepeau@uw.edu}}}
\author{Pamela E. Harris}
\address{Department of Mathematical Sciences, University of Wisconsin-Milwaukee, United States}
\email{\textcolor{blue}{\href{mailto:peharris@uwm.edu}{peharris@uwm.edu}}}
\author{Sean Hays}
\address{Department of Mathematics, University of Alabama, United States}
\email{\textcolor{blue}{\href{mailto:sphays@crimson.ua.edu}{sphays@crimson.ua.edu}}}
\author{Marissa Loving}
\address{Department of Mathematics, University of Wisconsin-Madison, United States}
\email{\textcolor{blue}{\href{mailto:mloving2@wisc.edu}{mloving2@wisc.edu}}}
\author{Joseph Rennie}
\address{Department of Mathematics, University of Illinois at Urbana-Champaign, United States}
\email{\textcolor{blue}{\href{mailto:rennie2@illinois.edu}{rennie2@illinois.edu}}}
\author{Gordon Rojas Kirby}
\address{School of Mathematics and Statistical Sciences, Arizona State University, United States}
\email{\textcolor{blue}{\href{mailto:girkirby@asu.edu}{girkirby@asu.edu}}}
\author{Alexandro Vasquez}
\address{Department of Mathematics, Manhattan College, United States}
\email{\textcolor{blue}{\href{mailto:alexvsqz98@gmail.com}{alexvsqz98@gmail.com}}}
\keywords{}
\begin{document}
\maketitle

\begin{abstract} 
Let $G=( V(G), E(G) )$ be a connected graph with vertex set $V(G)$ and edge set $E(G)$. We say a subset $D$ of $V(G)$ dominates $G$ if every vertex in $V \setminus D$ is adjacent to a vertex in $D$. A generalization of this concept is $(t,r)$ broadcast domination. We designate certain vertices to be towers of signal strength $t$, which send out signal to neighboring vertices with signal strength decaying linearly as the signal traverses the edges of the graph. We let $\mathbb{T}$ be the set of all towers, and we define the signal received by a vertex $v\in V(G)$ from all towers $w \in \mathbb T$ to be \mbox{$f(v)=\sum_{w\in \mathbb{T}}max(0,t-d(v,w))$}. Blessing--Insko--Johnson--Mauretour defined a $(t,r)$ \textit{broadcast dominating set}, or a $(t,r) $ \textit{broadcast}, on $G$ as a set $\mathbb{T} \subseteq V(G) $ such that $f(v)\geq r$ for all $v\in V(G)$. The minimum cardinality of a $(t, r)$ broadcast on $G$ is called the $(t, r)$ \textbf{broadcast domination number} of $G$. 
In this paper, we present our research on the $(t,r)$ broadcast domination number for certain graphs including paths, grid graphs, the slant lattice, and the king's lattice.
\end{abstract}

\section{Introduction}

Structures such as cell phone towers radiate signal to their surroundings.  As the distance from the tower increases, the signal becomes weaker. Eventually, areas sufficiently far from the tower will not receive any signal. As a result, the towers must be placed strategically to provide signal to all areas while using as few towers as possible. The concept of $(t,r)$ broadcasting models this scenario. Let $G=(V(G),E(G))$ be a graph and $\mathbb{T}\subseteq V(G)$ be a set of broadcasting vertices, or towers. Each broadcasting vertex is assigned a finite \textbf{transmission strength}, $t$, to send signal to nearby vertices. We define the \textbf{signal} received by a vertex $v\in V(G)$ as 
\[ f(v)=\sum_{w\in \mathbb{T}} \max\{0, (t-d(v,w))\},\] 
where $w$ is a tower and $d(v,w)$ denotes the distance between $v$ and $w$. We use the term {\bf broadcast zone} to describe the neighborhood $N_{t-1}(w):= \{v \in G: d(v, w) \leq t-1 \}$ that receives signal from a given tower $w$. We also use the term {\bf overlap} to describe vertices in the broadcast zone of more than one tower.

In 2014, Blessing--Insko--Johnson--Mauretour \cite{gridgraph} introduced the notion of {\bf $(t,r)$ broadcast domination}. A collection of vertices $D \subseteq V(G)$ is a {\bf $(t, r)$ broadcast dominating set} for $G$ if every $w \in D$ is a broadcast tower of strength $t$ and every vertex $v \in V(G)$ receives a signal of at least $r$. The $(t,r)$ \textbf{broadcast domination number} of $G$, denoted $\gamma_{t,r}(G)$, is the minimum cardinality of a $(t,r)$ broadcast dominating set for $G$. An \textbf{efficient broadcast} is a $(t,r)$ broadcast dominating set that minimizes wasted signal in the sense that every vertex in the overlap between multiple towers receives a signal of strength exactly $r$. Throughout this paper we refer to the $(t,r)$ broadcast domination number as the domination number, and the $(t,r)$ broadcast dominating set as the dominating set.

In this paper, we establish an explicit formula for the $(t,r)$ broadcast domination number on paths with $n$ vertices, which we denote $P_n$. An algorithm (written in SageMath) to generate a $(t,r)$ broadcast dominating set of $P_n$ for given $t, r$, and $n$ is provided in \Cref{ap:algorithm}.

\begin{restatable}[]{thm}{trpath}
\label{thm:trpath}
If $n \geq 1$ and $t \geq r \geq 1$, then \[\gamma_{t,r}(P_n)= \left \lceil \frac{n+(r-1)}{2t - r} \right \rceil.\]
\end{restatable}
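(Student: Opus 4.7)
The proof has two inequalities. For the upper bound $\gamma_{t,r}(P_n)\le \lceil (n+r-1)/(2t-r)\rceil$, I will exhibit a broadcast of that size. With $k=\lceil (n+r-1)/(2t-r)\rceil$, place towers at the equally spaced positions $p_i=(t-r+1)+(i-1)(2t-r)$ for $i=1,\ldots,k$, with a small adjustment to the last tower if it would otherwise exceed $v_n$. The verification reduces to one calculation: between two adjacent towers at distance exactly $2t-r$, every vertex strictly between them receives signal $2t-(2t-r)=r$ from that pair alone; and every vertex within distance $t-r$ of $p_1$ (respectively $p_k$) is covered with signal $\ge r$ by that nearest tower. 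The choice of $k$ is exactly what is needed for these zones to tile $\{v_1,\ldots,v_n\}$.

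For the lower bound, consider any valid broadcast with towers $p_1<\cdots<p_k$, and aim to show $k(2t-r)\ge n+r-1$. The clean structural argument rests on two claims: (a) consecutive towers satisfy $p_{i+1}-p_i\le 2t-r$, since the midpoint of a larger gap would receive only $2t-(p_{i+1}-p_i)<r$ from the bracketing towers and strictly less per tower from anything farther; and (b) $p_1\le t-r+1$ and $p_k\ge n-(t-r)$, so that $v_1$ and $v_n$ are covered with signal $\ge r$ by the nearest tower alone. Granting (a) and (b), telescoping gives $(k-1)(2t-r)\ge p_k-p_1\ge n-2(t-r)-1=(n+r-1)-(2t-r)$, whence $k(2t-r)\ge n+r-1$ and the ceiling bound follows.

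The principal obstacle is that neither (a) nor (b) is automatic for every valid broadcast: a cluster of towers near the left endpoint can jointly supply signal $\ge r$ at $v_1$ without any single tower lying within $t-r+1$ of it, and symmetric clusters bracketing an oversized interior gap can support $p_{i+1}-p_i>2t-r$ (for instance with $t=5$, $r=4$, towers at $9,10,17,18$ create a valid gap of $7>2t-r$). The resolution I plan is an exchange argument showing that in some optimal broadcast (a) and (b) both hold. Concretely, a boundary cluster must contain at least two towers in $[1,t]$ whose contributions to $v_1$ sum to $r$; one of them can be shifted outward to position $t-r+1$ without damaging coverage, since every coverage decrease under this shift occurs at a vertex still receiving slack from the remaining cluster member. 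A similar merge-and-shift step handles oversized gaps. Iterating these moves yields a canonical broadcast with the same $k$ in which (a) and (b) both hold, at which point the counting above completes the proof. The most delicate step will be verifying coverage invariance under each individual move via a short case analysis on the configuration of neighboring towers.
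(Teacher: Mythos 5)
Your upper-bound construction is essentially the one the paper uses, and it is correct: between consecutive towers at distance $2t-r$ a vertex either sees both towers, receiving exactly $2t-(2t-r)=r$ from the pair, or is within distance $t-r$ of the nearer one and receives at least $r$ from it alone, and the choice of $k$ makes these zones cover $\{v_1,\dots,v_n\}$.

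The lower bound is where the proposal has a genuine gap. You correctly identify---more honestly than the paper, whose minimality argument is a brief ``remove a tower and shift'' heuristic that never confronts this issue---that clustered towers can violate both structural claims (a) and (b), and your counterexample to (a) checks out. But the entire lower bound then rests on the exchange lemma, which you only outline, and the one-sentence justification you offer for the boundary exchange is false. Take $t=5$, $r=4$, $n=11$ and towers at $\{3,4,10\}$: this is a valid and in fact optimal broadcast ($k=3=\lceil 14/6\rceil$) with $p_1=3>t-r+1=2$, and the boundary cluster $\{3,4\}$ supplies $3+2=5\ge r$ to $v_1$. Shifting the cluster member at $4$ outward to position $2$ leaves $v_8$ with signal $0+0+3=3<r$; the coverage loss at $v_8$ is not absorbed by ``slack from the remaining cluster member,'' since the tower at $3$ contributes $0$ to $v_8$. (Shifting the tower at $3$ instead happens to work here, but your lemma does not specify which tower to move, and even for the leftmost tower the claim that every damaged vertex retains enough slack is unproven and not obvious; it may require invoking optimality of the broadcast, which adds further work.) Until the exchange lemma---for boundary clusters and for oversized interior gaps---is actually proved, the inequality $k(2t-r)\ge n+r-1$ is not established. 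For what it is worth, the paper's own proof of minimality is also not rigorous, so completing your normalization argument would be a genuine improvement on the source rather than a reproduction of it.
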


We examine grid graphs with $m$ rows and $n$ columns, denoted $G_{m,n}$. There are several notable previous results on the $(t,r)$ broadcast domination of $m \times n$ grid graphs for small values of $m$. In particular, Blessing-Insko-Johnson-Mauretour \cite{gridgraph} gave formulas for $\gamma_{t,r}(G_{3,n})$ and $\gamma_{t,r}(G_{4,n})$ when $(t,r)\in \{(3,1),(2,2),(3,2)\}$. In 2017, Randolph \cite{randolph} proved an upper bound of $\gamma_{t,2}(G_{m,n})$ for $t > 2$. We establish an upper bound for the $(t,r)$ broadcast domination number of an $m \times n$ grid graph for $n \geq 2t-r-(m-2)$ and $2t-r > m-1$ by developing the notion of a $(t,r)$ \textbf{starting block} (defined in Section 3), which is used throughout the paper. 

\begin{restatable}[]{thm}{mxntr}

\label{thm:mxntr}
If $m \geq 2$, $n \geq 2t - r -(m-2)$, and $2t - r > m-1$, then \[\gamma_{t,r}(G_{m,n}) \leq 2 + \left \lceil \frac{n- (2t - r - (m-2))}{(2t - r - (m-2))-1} \right \rceil.\]
\end{restatable}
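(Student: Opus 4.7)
The plan is to prove the stated equality by establishing matching upper and lower bounds. Set $k := 2t - r - (m - 2)$, so the claim reads $\gamma_{t,r}(G_{m,n}) = c^*$ where $c^* := 2 + \lceil (n - k)/(k - 1) \rceil$. The hypothesis $2t - r > m - 1$ is equivalent to $k \geq 2$, ensuring that the denominator $k - 1$ is positive, and the hypothesis $n \geq k$ ensures that the ceiling is nonnegative.

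For the upper bound, I would exhibit an explicit efficient $(t, r)$ broadcast of size $c^*$ in a zig-zag pattern along the top and bottom rows. Place the $i$-th tower at column $c_i := 1 + (i - 1)(k - 1)$ (pulling the last tower's column back to $n$ if necessary), alternating row $1$ for odd $i$ and row $m$ for even $i$. The key computation is that in the strip between consecutive towers at $(1, c_i)$ and $(m, c_{i+1})$ with horizontal gap $c_{i+1} - c_i = k - 1$, for any vertex $(i', j)$ in the strip at which both tower signals are positive, the combined signal telescopes as
\[
\bigl(t - (i' - 1) - (j - c_i)\bigr) + \bigl(t - (m - i') - (c_{i+1} - j)\bigr) = 2t - (m - 1) - (k - 1) = r,
\]
exhibiting an efficient broadcast on the overlap region. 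Corner and boundary vertices at which one of the two summands vanishes are handled by a small case analysis using $2t - r > m - 1$, verifying that the remaining tower alone, or combined with one neighbor in the zig-zag, still provides at least $r$.

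For the lower bound, let $D$ be a $(t, r)$ broadcast of minimum size $c$, sort its towers by column coordinate as $j_1 \leq j_2 \leq \cdots \leq j_c$ with corresponding row coordinates $i_1, \ldots, i_c$, and set $\delta_i := j_{i+1} - j_i$. The central step is a \emph{gap lemma}: $\delta_i \leq k - 1$ for all $i$, with analogous bounds on $j_1 - 1$ and $n - j_c$ governing the boundary. To prove it, fix $i$ and choose a worst-served vertex $(i^*, j^*)$ with $j^* \in [j_i, j_{i+1}]$ and $i^* \in [1, m]$ maximizing $|i_i - i^*| + |i_{i+1} - i^*|$. A short case analysis on $(i_i, i_{i+1})$ shows that this maximum is at least $m - 1$ (attained when one tower lies on row $1$ and the other on row $m$), so the combined contribution of towers $i, i+1$ at the worst vertex is at most $2t - (m - 1) - \delta_i$. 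Requiring the broadcast condition at $(i^*, j^*)$ to hold forces $\delta_i \leq 2t - r - (m - 1) = k - 1$. Summing the $c - 1$ column gaps and adding boundary estimates yields $n \leq 1 + (c - 1)(k - 1) + O(1)$, which rearranges to $c \geq c^*$.

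The main obstacle is making the gap lemma fully rigorous: a priori, the worst vertex in a ``large'' gap could be broadcast-dominated by signal from towers other than the two directly bracketing it. Overcoming this requires either a local argument, in which $(i^*, j^*)$ is chosen so that all other towers lie at distance $\geq t$ and thus contribute zero, or a global dual LP / discharging argument over a cleverly chosen set of test vertices. A parallel delicacy arises at the boundary, where one must show that the first and last towers must lie sufficiently close to columns $1$ and $n$ respectively, in order to translate the gap lemma into the claimed closed-form lower bound.
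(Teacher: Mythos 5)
Your upper bound is essentially the paper's construction: a starting block with towers in the upper-left and lower-right corners, followed by towers spaced $(2t-r-(m-2))-1$ columns apart alternating between the top and bottom rows. Your telescoping identity
\[
\bigl(t-(i'-1)-(j-c_i)\bigr)+\bigl(t-(m-i')-(c_{i+1}-j)\bigr)=2t-(m-1)-(k-1)=r
\]
is correct, and the observation that when one summand is nonpositive the two distances still sum to $2t-r$, so the remaining tower alone supplies signal at least $r$, actually disposes of the ``corner and boundary'' cases completely --- no further case analysis is needed there. This part is fine and is, if anything, more explicit than what the paper writes.

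The genuine gap is the lower bound, and you have named it yourself: the gap lemma ($\delta_i\leq k-1$ for consecutive column gaps, plus control of $j_1-1$ and $n-j_c$) is stated but not proved. The difficulty is exactly the one you flag --- a worst-served vertex between the $i$-th and $(i+1)$-st towers (ordered by column) may receive positive signal from towers that do not bracket it, so bounding its reception by the contribution of two towers is not justified; likewise, nothing in your sketch forces the extremal towers to sit near columns $1$ and $n$. As written, the proposal is therefore an incomplete proof of the equality: it establishes only the inequality $\gamma_{t,r}(G_{m,n})\leq c^*$. For what it is worth, the paper's own minimality argument is also informal --- it argues only that the particular zig-zag construction cannot be stretched without creating under-served vertices, which does not rule out a differently shaped dominating set of smaller cardinality --- so your proposed gap lemma is aiming at a genuinely more rigorous argument than the one the paper gives, but you would need to actually carry out the local isolation argument (or a discharging/counting argument over a set of test vertices) to close it.
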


We also consider the $(t,r)$ broadcast domination number of $3$-dimensional grid graphs of size $m \times n \times k$, whose vertices are subsets of $\mathbb Z^3$ with edges joining vertices that are distance $1$ apart.

\begin{restatable}[]{thm}{upperbound}
\label{thm:upperbound}
Let $G_{m,n,k}$ be a 3D grid graph, and let $B$ be the number of $(t,r)$ starting blocks required to construct a 3D grid graph that completely covers $G_{m,n,k}$, so that every edge and vertex in $G_{m,n,k}$ belongs to at least one $(t,r)$ starting block. Since each starting block is efficiently $(t,r)$ dominated by $2$ towers, then $\gamma_{t,r} (G_{m,n,k}) \leq 2 \cdot B$.
\end{restatable}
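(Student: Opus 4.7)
The plan is to prove this purely constructively: exhibit a $(t,r)$ broadcast dominating set of size at most $2B$, where the $2B$ towers come in pairs, one pair per starting block used in the cover. Since the statement itself essentially packages the construction, the proof should proceed by making the covering/tiling procedure rigorous and then verifying the signal condition everywhere.

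First, I would formalize the notion of a \emph{$(t,r)$ starting block} as a specific $3$-dimensional subgrid whose vertices can be efficiently dominated by exactly two towers of transmission strength $t$, so that every vertex in the block receives signal at least $r$. I would fix the two tower positions inside a canonical block (for instance placed symmetrically along one axis so that the two broadcast zones $N_{t-1}$ overlap to raise the signal to $r$ on the central slab). This gives a local template that meets the $(t,r)$ requirement on an isolated block.

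Next, I would define the cover: translate copies of the starting block along the three coordinate axes so that their union contains $V(G_{m,n,k})$. Let $B$ be the minimum number of translates needed; by hypothesis this quantity is well defined. For each translate, place the two canonical towers at the corresponding translated positions. The resulting set $\mathbb{T}$ has cardinality at most $2B$ (with equality if the translates are disjoint, and strictly less harmless since we only claim an upper bound). For any vertex $v \in V(G_{m,n,k})$, $v$ lies in at least one translated block, and inside that block the two associated towers already guarantee $f(v)\geq r$; signal from towers in neighboring blocks can only increase $f(v)$, so the $(t,r)$ broadcast condition holds globally. This yields $\gamma_{t,r}(G_{m,n,k}) \leq |\mathbb{T}| \leq 2B$.

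The main obstacle is the boundary behavior: when $G_{m,n,k}$ is not an exact union of disjoint starting blocks, the translates must either overlap or protrude outside $G_{m,n,k}$. I would handle this by allowing the cover to extend past the grid and restricting to the induced vertex set, noting that towers placed outside $G_{m,n,k}$ are not admissible; therefore in the construction I need to shift each protruding block inward so its towers still sit in $V(G_{m,n,k})$ while the block continues to dominate the relevant boundary vertices. The verification reduces to checking that the canonical two-tower pattern remains efficient after such a shift, which follows from the symmetry of the block and the fact that the efficient broadcast was defined on an isolated copy in $\mathbb{Z}^3$. Once this boundary argument is in place, the bound $\gamma_{t,r}(G_{m,n,k}) \leq 2B$ follows immediately from counting towers.
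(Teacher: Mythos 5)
Your proposal is correct and takes essentially the same approach as the paper: cover $G_{m,n,k}$ by translates of a $(t,r)$ starting block, place the two canonical towers in each translate, and observe that every vertex lies in some block whose pair of towers already delivers signal $r$, giving at most $2B$ towers in total. You are in fact somewhat more careful than the paper's own (very brief) proof, since you explicitly address the boundary issue of blocks that protrude outside the grid and would place towers at non-vertices, which the paper does not discuss.
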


Finally, we consider the slant grid graph with $m$ rows and $n$ columns, denoted $S_{m,n}$, and the king's grid graph with $m$ rows and $n$ columns, denoted $K_{m,n}$. We provide an explicit formula for the $(t,r)$ broadcast domination number of $S_{2,n}$ when $t > r$.

\begin{restatable}[]{thm}{slantdom}
\label{2xn_slant_dom}
If $t>r$, then $\gamma_{t,r} (S_{2,n}) = \left \lceil \frac{2(n+r-1)}{4t-2r-1} \right \rceil$. 
\end{restatable}

We also establish an explicit formula for the domination number of $K_{m,n}$ when $t > r$ and $m \leq 2(t-r) + 1$.  

\begin{restatable}[]{thm}{kingmxn}
\label{thm:kingmxn}
For $t > r$ and $ m \leq 2(t-r) + 1$, then 
$\gamma_{t,r}(K_{m,n})= \left \lceil \frac{n+r-1}{2t-r} \right \rceil.$
\end{restatable}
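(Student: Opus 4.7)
The plan is to reduce the king's grid domination problem to Theorem~\ref{thm:trpath} via projection onto a single row. The key enabling fact is that, under the hypothesis $m \leq 2(t-r)+1$, we can pick a ``central'' row index $r_0 \in \{1, \dots, m\}$ satisfying $|i - r_0| \leq t-r$ for every $i \in \{1, \dots, m\}$; this amounts to requiring $r_0 \in [m-(t-r),\, 1+(t-r)]$, a nonempty interval precisely when $m \leq 2(t-r)+1$. Throughout, recall that distances in $K_{m,n}$ are Chebyshev distances, so a tower at $(i',c)$ contributes $\max(0,\, t - \max(|i-i'|, |j-c|))$ to the signal at $(i,j)$.

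For the upper bound, I would invoke Theorem~\ref{thm:trpath} to produce an optimal $(t,r)$ broadcast $\mathbb{T}_P \subset \{1, \dots, n\}$ on $P_n$ of size $\lceil (n+r-1)/(2t-r) \rceil$, then embed it into the central row as $\mathbb{T} = \{(r_0, c) : c \in \mathbb{T}_P\}$. To verify this is a $(t,r)$ broadcast on $K_{m,n}$, fix any vertex $(i,j)$ and split on whether some $c^\ast \in \mathbb{T}_P$ satisfies $|j-c^\ast| \leq t-r$. If so, both $|i-r_0|$ and $|j-c^\ast|$ are at most $t-r$, so the tower at $(r_0, c^\ast)$ alone delivers signal $\geq r$ to $(i,j)$. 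If not, then for every $c \in \mathbb{T}_P$ we have $|j-c| > t-r \geq |i-r_0|$, so $\max(|i-r_0|,|j-c|) = |j-c|$ for each tower, and the total signal at $(i,j)$ equals the $P_n$ signal at $j$, which is at least $r$ by choice of $\mathbb{T}_P$.

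For the lower bound, given any $(t,r)$ broadcast $\mathbb{T}$ on $K_{m,n}$, I would project each tower $(i',c) \in \mathbb{T}$ to its column index, producing a \emph{multiset} $\mathbb{T}'$ in $V(P_n)$ with $|\mathbb{T}'| = |\mathbb{T}|$. Since $\max(|i-r_0|, |j-c|) \geq |j-c|$ termwise, the $P_n$ signal at $j$ from $\mathbb{T}'$ is at least the $K_{m,n}$ signal at $(r_0,j)$ from $\mathbb{T}$, which is $\geq r$; hence $\mathbb{T}'$ is a multiset $(t,r)$ broadcast on $P_n$. The main obstacle is then showing that this multiset version obeys the same lower bound as the set version in Theorem~\ref{thm:trpath}. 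I expect this to be resolvable because the natural proof of Theorem~\ref{thm:trpath} proceeds by an additive counting argument in which each tower's contribution to a block of consecutive vertices is bounded independently, and such arguments are insensitive to multiplicities; a robust fallback is to prove the lower bound on $K_{m,n}$ directly by partitioning $\{1,\dots,n\}$ into $\lceil (n+r-1)/(2t-r) \rceil$ column intervals of length $2t-r$ (with a boundary adjustment of $r-1$) and showing that each interval must contain the column of at least one tower of $\mathbb{T}$.
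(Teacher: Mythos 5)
Your proposal is correct in outline but takes a genuinely different route from the paper. The paper does not reduce to $P_n$: it builds the broadcast from the ``starting block'' of Lemma \ref{kings_lattice_start_block} (a $K_{m,4t-3r+1}$ block efficiently dominated by two towers on row $\lceil m/2\rceil$), chains these blocks with $r-1$ columns of overlap to get the count $\lceil (n+r-1)/(2t-r)\rceil$, and argues minimality informally (``the starting block is efficient, so towers cannot be spread farther apart''), citing Lemma \ref{vertex_overlap} for the overlap. Your construction ends up placing towers on the same central row with the same spacing, but your framing -- Chebyshev distance plus the observation that $m\le 2(t-r)+1$ makes every row within $t-r$ of a central row $r_0$, so that both the upper bound (embed an optimal path broadcast) and the lower bound (project tower columns and compare signals termwise) reduce cleanly to Theorem \ref{thm:trpath} -- is sharper and more self-contained than the paper's. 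In particular your lower bound is a genuine reduction, whereas the paper's is an efficiency heuristic; your approach buys rigor and reusability, while the paper's buys consistency with its starting-block machinery used elsewhere.

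The one step you correctly flag does need real work: Theorem \ref{thm:trpath} is stated for sets, and your projection produces a multiset, since distinct towers in a column collapse to one path vertex with multiplicity. Be aware that your fallback, as sketched, is shakier than you suggest: it is not true for general $r$ that every window of $2t-r$ consecutive columns must contain a tower column, because a vertex inside a tower-free window can accumulate signal $\ge r$ from several towers stacked just outside it (already for $r=4$ the naive bound on such accumulated signal reaches exactly $r$ at the critical vertex). So you should either prove the multiset version of the path lower bound directly (e.g.\ by induction on the number of towers, peeling off the leftmost tower and a block of $2t-r$ vertices, an argument that is indeed insensitive to multiplicity), or show that a multiset broadcast on $P_n$ can always be converted to a set broadcast of no larger cardinality. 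Either fix is routine but must be written out; note that the paper's own proof of the path lower bound is itself informal, so you are not relying on anything weaker than what the paper already assumes.
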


Furthermore, we find efficient broadcasting domination patterns, a placement of broadcasting towers such that each vertex receives at least $r$ signal, on the infinite king's lattice for $(t,1)$ and $(t,2)$.

\subsection*{Outline of Paper} In Section \ref{PathSection}, we give the $(t,r)$ broadcast domination number of paths on $n$ vertices in terms of $t, r,$ and $n$. Section \ref{GridGraphSection} is focused on proving the upper bound on the $(t,r)$ broadcast domination number of $m \times n$ grid graphs and develops the notion of a $(t,r)$ \textbf{starting block} which is used throughout the remainder of the paper. We then generalize our results on the $m \times n$ grid graph to the 3D grid graph in Section \ref{3DGridSection} and give upper bounds on the $(t,r)$ broadcast domination number of $m \times n \times k$ $3D$ grid graphs. Finally, in Section \ref{LatticeSections} we give efficient broadcast domination sets for certain $t$ and $r$ for both the finite king's grid graph and the infinite king's lattice. We also leverage previous results of Harris--Luque--Flores--Sepulveda \cite{infinite_triangle} for the infinite triangular lattice to give upper bounds on the $(t,r)$ broadcast domination number of the $m \times n$ slant grid graph.

\subsection*{Acknowledgements}
This research was supported in part by the Alfred P. Sloan Foundation, the Mathematical Sciences Research Institute, and the National Science Foundation (grant No. DMS-1156499). We would like to thank Rebecca Garcia for her feedback on earlier drafts of this manuscript.


\section{\texorpdfstring{$(t,r)$}{} Broadcast Domination of Paths}
\label{PathSection}
In this section we study the $(t,r)$ broadcast domination number of paths on $n$ vertices. We find the $(t,r)$ broadcast domination number of $P_n$ for any $t \geq r \geq 1$. We begin by considering the size of the overlap needed between two towers in order to build an efficient broadcast for $P_n$.

\begin{lemma} \label{vertex_overlap}
Given a $(t, r)$ broadcast dominating set $D$ of a path $P_n$, any two consecutive broadcast vertices in $D$ have an overlap in their broadcast zones of at least $r - 1$ vertices, with the overlap being precisely $r-1$ vertices when two consecutive broadcast vertices are distance $2t-r$ from each other, so that vertices in the overlap receive exactly $r$ amount of signal.
\end{lemma}

\begin{proof}
Consider $\gamma_{t, t-k}(P_n)$, which is the size of an optimal $(t, t-k)$ broadcast dominating set. Without overlap and when placing towers left to right, there will be $t-k-1$ vertices to the right of the tower that receive insufficient signal. Each of these vertices needs to receive signal from another tower. Thus, the amount of overlap between two broadcast towers needs to be at least $t-k-1$. Note that the required reception is just $r = t - k$, so the amount of the overlapped vertices is at least $t - k - 1 = r- 1,$ as desired.
\end{proof}

We are now ready to prove Theorem \ref{thm:trpath} using Lemma \ref{vertex_overlap}.

\trpath*
\begin{proof}

\begin{figure}[htb!]
    \centering
\begin{tikzpicture}
\draw[thick](0,0)--(2,0);
\draw[thick] (3,0)--(6,0);
\draw[thick] (7,0)--(10,0);
\draw[thick] (11,0)--(13,0);

\fill[black] (2.15,0) circle (1pt);
\fill[black] (2.5,0) circle (1pt);
\fill[black] (2.85,0) circle (1pt);

\fill[black] (6.15,0) circle (1pt);
\fill[black] (6.5,0) circle (1pt);
\fill[black] (6.85,0) circle (1pt);

\fill[black] (0,0) circle (3pt);
\node at (0,-.5) {$r$};
\fill[black] (1,0) circle (3pt);
\node at (1,-.5) {$r+1$};

\fill[black] (4,0) circle (3pt);
\node at (4,-.5) {$t$};
\fill[black] (5,0) circle (3pt);
\node at (5,-.5) {$t-1$};

\fill[black] (8,0) circle (3pt);
\node at (8,-.5) {$r$};
\fill[black] (9,0) circle (3pt);
\node at (9,-.5) {$r-1$};

\fill[black] (10.15,0) circle (1pt);
\fill[black] (10.5,0) circle (1pt);
\fill[black] (10.85,0) circle (1pt);

\fill[black] (12,0) circle (3pt);
\node at (12,-.5) {$2$};
\fill[black] (13,0) circle (3pt);
\node at (13,-.5) {$1$};
\end{tikzpicture}
\caption{First Vertices of the Efficiently Dominated Path}
    \label{fig:pathconstructionex}
\end{figure}
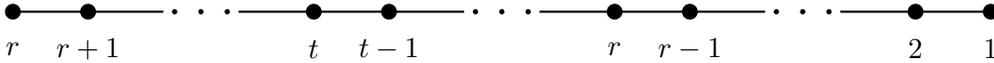

To construct an optimal dominating set by maximizing the size of a broadcast tower's broadcast zone, we want the first vertex of the path to have reception $r$, as shown in Figure \ref{fig:pathconstructionex}. There are $t-r$ vertices from the first vertex to the broadcast tower and $t$ vertices following the broadcast tower that receive reception from this tower, so the size of the broadcast zone is then $2t - r$. So, we want to partition the path into groups of $2t - r$ vertices, and place a tower in each partition. If no overlap was necessary, the number of towers used would be$ \left \lceil \frac{n}{2t - r} \right  \rceil$. However, there is necessary overlap required so that all vertices have at least reception $r$. We  know $r-1$ vertices in each broadcast zone also belong to another broadcast zone. If overlap is required, which is when $r > 1$, we need more towers. To account for the towers needed for the $r-1$ overlapped vertices in each partition, which are size $2t-r$, we must add $r-1$ to the number of vertices $n$. Therefore, we've constructed  a dominating set on a path with $n$ vertices that uses $\left \lceil \frac{n+r-1}{2t-r} \right \rceil$ towers.

Now we will show that the dominating set of order $\left \lceil \frac{n+(r-1)}{2t - r} \right \rceil$ is of minimum cardinality. Let $P$ be a path on $n$ vertices. We know $\left \lceil \frac{n+(r-1)}{2t-r} \right \rceil $ is a dominating set. Assume there is a smaller dominating set of order $\left \lceil \frac{n+(r-1)}{2t-r} \right \rceil -1$. However, since by assumption this is a $(t,r)$ dominating set, there must be an overlap of at least $r-1$ vertices in the broadcast zones of two consecutive broadcast towers by Lemma \ref{vertex_overlap}. Therefore, the number of vertices between two consecutive broadcasting towers is at most $(t-r) + (t-r) + r-1 = 2t - r -1$. Then, the distance between two consecutive broadcasting towers is required to be at most $2t-r$. If we remove one tower, we lose one broadcast zone. This means that there are vertices with insufficient reception. We can try to shift the remaining towers to cover those vertices, but the requirement for the distance between consecutive broadcasting towers to be at most $2t - r$ would leave us with insufficient reception at the beginning or end of the path. So, there exists no smaller dominating set.
\end{proof}

We wrote an algorithm (in SageMath) which generates a dominating set for $P_n$ for any given $t, r,$ and $n$. See \cref{ap:algorithm}.



\section{\texorpdfstring{$(t,r)$}{} Broadcast Domination of Grid Graphs}
\label{GridGraphSection}

Significant work on the $(t, r)$ broadcast domination number of grid graphs was carried out by Blessing--Insko--Johnson--Mauretour in \cite{gridgraph}, as outlined in the introduction. We now extend these results in several directions, both for more general values of $t$ and $r$ and for larger families of grid graphs. 

To generate our upper bound of the $(t,r)$ broadcast domination number, we introduce the notion of a $(t,r)$ {\bf starting block}, which is a grid that is efficiently dominated by two towers of strength $t$.

\begin{theorem} \label{condensed_block_results}\label{starting_block_2xn} \label{mxn_starting_block} \label{2_2_k_block_r=1}
Let $G_{m_1,...,m_d}$ be a $d$-dimensional grid graph and $(t,r)$ be such that $t \geq d$ and $t\geq r \geq 1$. If $m_1+m_2+\cdots +m_d = 2t-r+d$, then $G_{m_1,m_2,\ldots, m_d}$ is efficiently dominated by $2$ broadcasting towers placed on antipodal vertices of $G_{m_1,m_2, \ldots, m_d}$, such as the vertices $(1,1,\ldots, 1)$ and $(m_1,m_2,\ldots, m_d)$.
\end{theorem}
\begin{proof}
Assume the two broadcasting vertices are placed at antipodal vertices of $G$. Since $m_1+m_2+\cdots +m_d = 2t-r+d$, every vertex in the grid $G_{m_1,m_2,\ldots, m_d}$ lies on a path of length $2t - r$ between the vertices $(1,1,\ldots,1)$ and $(m_1,m_2,\ldots, m_d)$. An example of such a path in $G_{m,n,k}$ is highlighted in Figure \ref{fig:path_btw_towers}. By Lemma \ref{vertex_overlap}, such a path is efficiently dominated, and therefore the entire grid graph is efficiently dominated by the two towers of strength $t$.
\end{proof}

\begin{figure}[htb!]
    \tdplotsetmaincoords{70}{110}

    \begin{tikzpicture}[tdplot_main_coords]
    \newcommand{\mm}{5}
    \newcommand{\nn}{4}
    \newcommand{\kk}{3}

    \draw[-] (0,0,0) -- (\mm,0,0) node[anchor=north east]{$m$};
    \draw[-] (0,0,0) -- (0,\nn,0) node[anchor=north west]{$n$};
    \draw[-] (0,0,0) -- (0,0,\kk);
    \node[black] at (0,-.5,\kk) {$k$};


    \draw[thick, black] (\mm, 0, \kk) to (\mm, \nn, \kk);
    \draw[thick, black] (0, \nn, \kk) to (0, \nn, 0);
    \draw[thick, black] (\mm, 0, 0) to (\mm, 0, \kk) to (0, 0, \kk); 
    \draw[thick, black] (\mm, 0, 0) to (\mm, \nn, 0) to (0, \nn, 0); 

    \draw[thin, black] (0, \nn, 0) to (0,0,0) to (\mm, 0, 0);

    \draw[thin, black] (0, 0, \kk) to (0, 0 ,0);

    \draw[thin, black] (\mm/2, 0, \kk) to (\mm/2, \nn, \kk) to (\mm/2, \nn, 0)  to (\mm/2, 0, 0) to cycle;
    
    \draw[thin, black] (0, \nn/2, \kk) to (\mm, \nn/2, \kk) to (\mm, \nn/2, 0) to (0, \nn/2, 0) to cycle;
    
    \draw[thin, black] (\mm, 0, \kk/2) to (\mm, \nn, \kk/2) to (0, \nn, \kk/2) to (0, 0, \kk/2) to cycle;
    
    \draw[thin, black] (\mm/2, \nn/2, \kk) to (\mm/2, \nn/2, 0);
    \draw[thin, black] (\mm/2, 0, \kk/2) to (\mm/2, \nn, \kk/2);
    \draw[thin, black] (0, \nn/2, \kk/2) to (\mm, \nn/2, \kk/2);

    \fill[gray, nearly transparent] (\mm, 0, 0) to ( \mm, 0, \kk) to (\mm, \nn, \kk) to (\mm, \nn, 0) to cycle;
    \fill[gray, opacity=0.5] (\mm, \nn, 0) to (0, \nn, 0) to (0, \nn, \kk) to (\mm, \nn, \kk) to cycle;
    \fill[gray, opacity=0.7] (0, 0, \kk) to (\mm, 0, \kk) to (\mm, \nn, \kk) to (0, \nn, \kk) to cycle;
    
    \draw[line width=2pt, green] (0,0,\kk) to (0,\nn,\kk) to (\mm,\nn,\kk) to (\mm,\nn,0);
    
    \fill[black] (\mm,0,0) circle (2 pt);
    \fill[black] (\mm/2,\nn,0) circle (2 pt);
    \fill[black] (0,\nn,0) circle (2 pt);
    \fill[black] (\mm, \nn/2, 0) circle (2pt);
    \fill[black] (\mm,0,\kk/2) circle (2 pt);
    \fill[black] (\mm,\nn/2,\kk/2) circle (2 pt);
    \fill[black] (\mm,\nn,\kk/2) circle (2 pt);
    \fill[black] (\mm/2,\nn,\kk/2) circle (2 pt);
    \fill[black] (0,\nn,\kk/2) circle (2 pt);
    \fill[black] (0,\nn,\kk) circle (2 pt);
    \fill[black] (\mm/2,\nn,\kk) circle (2 pt);
    \fill[black] (\mm,\nn,\kk) circle (2 pt);
    \fill[black] (\mm,\nn/2,\kk) circle (2 pt);
    \fill[black] (\mm,0,\kk) circle (2 pt);
    \fill[black] (\mm/2,0,\kk) circle (2 pt);

    \fill[black] (0,\nn/2,\kk) circle (2 pt);    
    \fill[black] (\mm/2,\nn/2,\kk) circle (2 pt);   
    \fill[black] (\mm/2, 0, \kk/2) circle (2 pt);
    \fill[black] (\mm/2, \nn/2, 0) circle (2 pt);
    \fill[black] (\mm/2, 0, 0) circle (2 pt);
    \fill[black] (0, \nn/2, 0) circle (2 pt);
    \fill[black] (0, \nn/2, \kk/2) circle (2 pt);
    \fill[black] (\mm/2, \nn/2, \kk/2) circle (2 pt);
    \fill[black] (0, 0, \kk/2) circle (2 pt);
    \fill[black] (0,0,0) circle (2 pt);
    
    \draw[dashed, black] (0,0,\kk) to (0,\nn,\kk) to (\mm,\nn,\kk) to (\mm, \nn, 0);
    \fill[red] (0,0,\kk) circle (2 pt);
    \fill[red] (\mm,\nn,0) circle (2 pt);
   
    \end{tikzpicture}
    \caption{Shortest path between towers at positions $(1,1,k)$ and $(m,n,1)$.}
    \label{fig:path_btw_towers}
\end{figure}

We illustrate Theorem \ref{condensed_block_results} for $d=2$ in the following example. First, consider $G_{2,3}$ with $(t,r) = (2,1)$. We have $2 + 3 = 2(2) - 1 + 2$, so $G_{2,3}$ is efficiently dominated by two towers of strength $2$, as shown in Figure \ref{fig:2x3_ex}.

\begin{figure}[htb!]
\centering
    \begin{tikzpicture}[scale=2]
    \foreach \i in {0,1}{
    \foreach \j in {0}{
    
    \draw[thick,black] (\i,\j)--(\i,\j+1);
    \draw[thick,black] (\i,\j)--(\i+1,\j);
    \fill[black] (\i,\j) circle (3pt);
    \fill[black] (\i,\j+1) circle (3pt); 
    \fill[black] (\i+1,\j) circle (3pt);
    \fill[black] (\i+1,\j+1) circle (3pt); 
    }
    }
    \draw[thick, black] (0,1)--(2,1)--(2,0);
    \fill[black] (0,1) circle (3pt);
    \fill[black] (2,1) circle (3pt);

    \fill[red] (0,1) circle (3pt);
    \fill[red] (2,0) circle (3pt);
    \node[red] at (0,1.25) {2};
    \node[red] at (2, -.25) {2};
    
    \end{tikzpicture}
    \caption{The $2 \times 3$ grid, $(2,1)$ dominated by $2$ towers.}
    \label{fig:2x3_ex}
\end{figure}

We now use Theorem \ref{mxn_starting_block} to give our general results for $\gamma_{t,r}(G_{m,n})$.

\mxntr*


\begin{proof}
We will construct our $(t, r)$ broadcast domination set beginning with a starting block with two broadcast vertices. From Theorem \ref{mxn_starting_block}, for $m, t,r$, the dimension of the starting block is the grid $G_{m,n'}$, where $n' = 2t-r-(m-2)$. We now need to dominate the remaining $n - (2t-r-(m-2))$ columns of $G_{m,n}$. We will place the next tower after the starting block $2t-r-(m-2)-1$ columns away in the top row. We continue to distribute towers at these distances, alternating their placement between the top and bottom row until we reach the end of the graph. Adding the starting block's two towers, we arrive at the formula $2 + \left \lceil \frac{n- (2t - r - (m-2))}{(2t - r - (m-2))-1} \right \rceil$, which is the size of our dominating set. \end{proof}

\section{\texorpdfstring{$(t,r)$}{} Broadcast Domination of the 3D Grid Graph}
\label{3DGridSection}

In this section, we consider $3$-dimensional grid graphs of size $m \times n \times k$, whose vertices are subsets of $\mathbb Z^3$ with edges joining vertices that are distance $1$ apart. We will denote these 3D grid graphs by $G_{m, n, k}$ where $m, n, k \in \mathbb N$. We begin by considering a simple case, namely the $(2,1)$ broadcast domination number for $G_{2, 2, k}$ when $k \geq 1$.


\begin{theorem}
If $k > 1$, then $\gamma_{2,1}(G_{2,2, k}) = k$. 
\end{theorem}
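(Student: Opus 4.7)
The plan is to first reduce $(2,1)$ broadcast domination to ordinary graph domination, and then match an explicit construction with a lower bound via a discharging argument. Observe that a tower of strength $2$ sends signal $\max(0,2-d(v,w))$, which is at least $1$ exactly when $d(v,w)\leq 1$, so a $(2,1)$ broadcast dominating set of $G_{2,2,k}$ is precisely an ordinary graph dominating set. It suffices to show $\gamma(G_{2,2,k})=k$.

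For the upper bound, I would exhibit a dominating set of size $k$ explicitly. Let $L_i=\{1,2\}\times\{1,2\}\times\{i\}$ denote the $i$-th cross-sectional layer and place a single tower in each layer at alternating antipodal corners: at $(1,1,i)$ when $i$ is odd and at $(2,2,i)$ when $i$ is even. The tower in $L_i$ dominates itself and its two in-layer neighbors, leaving only the antipodal vertex of $L_i$ uncovered from within its layer; that vertex shares $(a,b)$-coordinates with the tower in $L_{i-1}$ or $L_{i+1}$ by the alternation, so it is dominated through the $k$-direction edge. The end layers $L_1$ and $L_k$ are handled by their single neighboring layer, yielding $\gamma(G_{2,2,k})\leq k$.

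For the lower bound, let $D$ be any dominating set and $t_i=|D\cap L_i|$, so $|D|=\sum_{i=1}^k t_i$. The key observation is that when $t_i=0$, every vertex $(a,b,i)\in L_i$ must be dominated from outside $L_i$; its only out-of-layer neighbors are $(a,b,i\pm 1)$, so the tower positions in $D\cap(L_{i-1}\cup L_{i+1})$ must exhaust all four $(a,b)$-coordinates, forcing $t_{i-1}+t_{i+1}\geq 4$. I would then run a discharging argument: assign each layer initial charge $1$ for a total of $k$, and for every zero layer $i$ redistribute $1$ unit of charge into the surpluses of $L_{i-1}$ and $L_{i+1}$, which together have surplus at least $(t_{i-1}+t_{i+1})-2\geq 2$ above their base charges. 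After redistribution each layer's charge is at most $t_i$, so $k\leq\sum t_i=|D|$.

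The main obstacle is the bookkeeping in the discharging step when there are consecutive zero layers or zero layers adjacent to the boundary. Three consecutive zero layers are impossible, because the middle one would violate the inequality with both neighbors zero. Two consecutive zeros $t_i=t_{i+1}=0$ force $t_{i-1},t_{i+2}\geq 4$, giving surplus $3+3=6$ against a combined deficit of $2$; a boundary zero ($i=1$ or $i=k$) has its single neighbor satisfying $t\geq 4$, giving surplus $3$ against deficit $1$. These cases, together with the isolated-zero case, cover all configurations and complete the proof. The formula holds as stated for $k\geq 2$; the edge case $k=1$ is $G_{2,2,1}=C_4$, for which $\gamma_{2,1}=2$ and which would need a separate treatment.
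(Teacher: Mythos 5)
Your proposal is genuinely different from, and considerably more careful than, the paper's own proof. The paper gives essentially the same upper-bound construction (one tower per layer, alternating between antipodal corners) but for the lower bound says only that the constructed set is efficient, which does not rule out a smaller dominating set; your reduction to ordinary domination and your layer-counting argument supply the missing half. You are also right that the statement fails at $k=1$: $G_{2,2,1}=C_4$ has domination number $2$, so the theorem should be restricted to $k\geq 2$ (the paper does not address this).

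There is, however, one concrete gap in the discharging step. Your rule sends each zero layer's unit of charge to its two neighbors and you then assert that every layer ends with charge at most $t_i$. This fails when a non-zero layer is sandwiched between two \emph{distinct} zero runs: the vector $t=(\dots,3,0,1,0,3,\dots)$ satisfies every constraint you derive (each interior zero has neighbor-sum $3+1=4$), yet the middle layer with $t_j=1$ receives charge from both sides and ends with charge $2>t_j$. Your case list (isolated zero, double zero, boundary zero) classifies the zero runs but not the non-zero layers that serve two runs at once, so the pointwise inequality you invoke is false as stated. The argument is salvageable by passing to a global count: for each maximal zero run $R$ with non-zero neighbor set $N(R)$ one has $\sum_{j\in N(R)}(t_j-1)\geq 2|R|$ (this is $\geq 2$, $\geq 6$, $\geq 3$ in your three cases, against $2|R|=2,4,2$ respectively), and each non-zero layer lies in $N(R)$ for at most two runs, so summing over runs gives $2\sum_{j\notin Z}(t_j-1)\geq 2|Z|$ and hence $\sum_i t_i\geq k$. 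With that repair, and with the hypothesis corrected to $k\geq 2$, your proof is complete.
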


\begin{proof}
Figure \ref{fig:2x2x2_(2,1)dom} illustrates how $G_{2,2,5}$ can be $(2,1)$ broadcast dominated by a set $D \subset V(G)$, in the case that $k = 5$. We can construct a $(2,1)$ broadcast dominating set $D$ for any $k$ by continuing this pattern of placing a tower at each level in the $k$ direction and alternating the placement of these towers between pairs of vertices on opposite diagonals in the $(m,n)$ plane. Notice that at every level in the $k$ direction the two vertices in the $(m,n)$ plane that are adjacent to a broadcast vertex only receive signal $1$. So $D$ is indeed a minimum $(2,1)$ broadcast dominating set; in fact, $D$ is a minimum dominating set.
\end{proof}

\begin{figure}[htb!]
    \tdplotsetmaincoords{70}{110}

    \begin{tikzpicture}[tdplot_main_coords]
    \newcommand{\mm}{3}
    \newcommand{\nn}{3}
    \newcommand{\kk}{8}

    \draw[thick] (0,0,0) -- (\mm,0,0) node[anchor=north east]{$m = 2$};
    \draw[thick] (0,0,0) -- (0,\nn,0) node[anchor=north west]{$n = 2$};
    \draw[thick] (0,0,0) -- (0,0,\kk) node at(0,0,8.5){$k = 5$};

    
    \draw[thick, black] (0,0,\kk) to (0,\nn,\kk) to (\mm,\nn,\kk) to (\mm,\nn,0);
    \draw[thick, black] (\mm, 0, \kk) to (\mm, \nn, \kk);
    \draw[thick, black] (0, \nn, \kk) to (0, \nn, 0);
    \draw[thick, black] (\mm, 0, 0) to (\mm, 0, \kk) to (0, 0, \kk); 
    \draw[thick, black] (\mm, 0, 0) to (\mm, \nn, 0) to (0, \nn, 0); 
    \draw[thick, black] (0,0, \kk -6) to (\mm,0,\kk -6);
    \draw[thick, black] (\mm,\nn,\kk -6) to (\mm,0,\kk -6);
    \draw[thick, black] (\mm,\nn,\kk -6) to (0,\nn,\kk -6);
    \draw[thick,black]  (0,0, \kk -6) to (0,\nn,\kk -6);
    
     \draw[thick, black] (0,0, \kk -4) to (\mm,0,\kk -4);
    \draw[thick, black] (\mm,\nn,\kk -4) to (\mm,0,\kk -4);
    \draw[thick, black] (\mm,\nn,\kk -4) to (0,\nn,\kk -4);
    \draw[thick,black]  (0,0, \kk -4) to (0,\nn,\kk -4);
     \draw[thick, black] (0,0, \kk -2) to (\mm,0,\kk -2);
    \draw[thick, black] (\mm,\nn,\kk -2) to (\mm,0,\kk -2);
    \draw[thick, black] (\mm,\nn,\kk -2) to (0,\nn,\kk -2);
    \draw[thick,black]  (0,0, \kk -2) to (0,\nn,\kk -2);
     \draw[thick, black] (0,0, \kk) to (\mm,0,\kk);
    \draw[thick, black] (\mm,\nn,\kk) to (\mm,0,\kk);
    \draw[thick, black] (\mm,\nn,\kk) to (0,\nn,\kk);
    \draw[thick,black]  (0,0, \kk) to (0,\nn,\kk);

    \fill[gray, nearly transparent] (\mm, 0, 0) to ( \mm, 0, \kk) to (\mm, \nn, \kk) to (\mm, \nn, 0) to cycle;
    \fill[gray, opacity=0.5] (\mm, \nn, 0) to (0, \nn, 0) to (0, \nn, \kk) to (\mm, \nn, \kk) to cycle;
    \fill[gray, opacity=0.7] (0, 0, \kk) to (\mm, 0, \kk) to (\mm, \nn, \kk) to (0, \nn, \kk) to cycle;

    \fill[black](0,0,0) circle (2pt);
    \fill[red](\mm,0,0) circle (4pt);
    \fill[black](0,\nn,0) circle (2pt);
    \fill[black](\mm,\nn,0) circle (2pt);
    \fill[black](0,0, \kk -6) circle (2pt);
    \fill[black](\mm,0,\kk -6) circle (2pt);
    \fill[black](\mm,\nn,\kk -6) circle (2pt);
    \fill[red](0,\nn,\kk -6) circle (4pt);
    \fill[black](0,0,\kk -4) circle (2pt);
    \fill[red](\mm,0,\kk -4) circle (4pt);
    \fill[black](\mm,\nn,\kk -4) circle (2pt);
    \fill[black](0,\nn, \kk -4) circle (2pt);
    
    \fill[black](0,0,\kk -2) circle (2pt);
    \fill[black](\mm,0,\kk -2) circle (2pt);
    \fill[red](0,\nn,\kk -2) circle (4pt);
    \fill[black](\mm,\nn,\kk -2) circle (2pt);
    \fill[black](0,0, \kk) circle (2pt);
    \fill[red](\mm,0,\kk) circle (4pt);
    \fill[black](\mm,\nn,\kk) circle (2pt);
    \fill[black](0,\mm,\kk) circle (2pt);

    \end{tikzpicture}
    \caption{A $(2,1)$ broadcast domination set for $G_{2,2,5}$ with broadcast towers in red.}
   
    \label{fig:2x2x2_(2,1)dom}
\end{figure}

We now want to consider the $(t,r)$ {\bf starting block} from Theorem \ref{condensed_block_results} for $d = 3$. Using Theorem \ref{2_2_k_block_r=1}, we construct a $(t,r)$ starting block of the 3D grid graph for $r \geq 1$.

\begin{lemma}
The $(t,r)$ starting block for $G_{2,2,k}$ is a $2 \times 2 \times (2t - r - 1)$ grid when $k \geq 2t - r -1$.
\end{lemma}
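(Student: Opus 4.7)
The plan is to induct on $r$, using Lemma \ref{2_2_k_block_r=1} as the base case $r=1$, where the formula $2t-r-1 = 2t-2$ agrees with the $(t,1)$ starting block found there. The inductive hypothesis will be that $G_{2,2,(2t-r-1)}$ is efficiently $(t,r)$ broadcast dominated by two towers, placed at opposite corners of the block (in the pattern inherited from the base case). The inductive step will show that $G_{2,2,(2t-(r+1)-1)} = G_{2,2,(2t-r-2)}$ is efficiently $(t,r+1)$ broadcast dominated by two towers.

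For the inductive step, I would track which vertices receive the minimum signal in the $(t,r)$ block. By the efficient placement of the two towers along the long $k$-axis at opposite $(m,n)$-corners, the vertices with the smallest reception are the two ends of the block in the $k$-direction adjacent to the broadcast towers, which receive exactly signal $r$ (this is what makes the $(t,r)$-block \textit{efficient}). Raising the required reception from $r$ to $r+1$ causes precisely those endpoint vertices to fall short by $1$. To fix this, I shift one of the two towers one unit toward the other along the $k$-axis: this raises the signal at the affected end vertex from $r$ to $r+1$, and symmetrically everywhere along the block the minimum reception becomes at least $r+1$. The resulting block loses exactly one layer in the $k$-direction, giving dimensions $2 \times 2 \times (2t-r-2) = 2 \times 2 \times (2t-(r+1)-1)$, as desired.

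Next, I would argue that this block is the longest possible that two towers can $(t,r+1)$-dominate: if we instead tried $G_{2,2,k'}$ for some $k' > 2t-(r+1)-1$, then the two end layers of the block would contain vertices at distance more than $t-(r+1)$ from one tower and more than $t-1$ from the other, so their combined signal would drop below $r+1$. Hence the length $2t-(r+1)-1$ is tight, completing the induction. Finally, I would note that the hypothesis $k \geq 2t-r-1$ in the statement simply says that we have enough room in the $k$-direction to place the starting block; if $k$ is strictly larger, we extract the starting block as a sub-block and the same two towers still dominate it.

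The main obstacle I anticipate is the careful bookkeeping of signal reception at the boundary vertices to verify that the shift truly raises every deficient vertex to at least $r+1$ without breaking the signal elsewhere, and that no longer block in the $k$-direction can be dominated by two towers at strength $t$ with required reception $r+1$. Both follow from the linear decay of signal along the $k$-axis, but the check must be done explicitly for the four corner vertices in each end layer since those are where the minimum reception is achieved.
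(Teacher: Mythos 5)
Your proposal is correct and follows essentially the same route as the paper's proof: induction on $r$ with Lemma \ref{2_2_k_block_r=1} as the base case, moving one tower a single step closer along the $k$-axis at each stage to shrink the block from $2t-r-1$ to $2t-(r+1)-1$ layers. Your version is in fact somewhat more careful than the paper's, since you also address tightness of the length and the role of the hypothesis $k \geq 2t-r-1$, but the underlying argument is the same.
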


\begin{proof}
We will proceed by induction on $r$. From Theorem \ref{2_2_k_block_r=1}, we have $2 + 2 + 2t - 1 - 1 = 2t+2 = 2t - 1 + 3$, so the $2 \times 2 \times (2t - r- 1)$ grid graph is a $(t,1)$ starting block. Now assume $\gamma_{t,r}(G_{2,2, (2t -r - 1)}) = 2$. We will show $\gamma_{t,r}(G_{2,2, (2t -r - 2)}) = 2$. If we increase $r$ to $r+1$, then vertices that received a signal of $r$ will no longer be dominated by our pattern of towers. Therefore, we must move one of the towers distance one closer to the other in our starting block. Specifically, we choose to move a tower one distance closer in the $k$ direction. This reduces the number of layers in our starting block by $1$ from $k=2t-r-1$ to $k=2t-r-2$.
\end{proof}

Our main results for $3$D grid graphs is the following upper bound for all $G_{m,n,k}$, developed using the concept of starting blocks.
\upperbound*

\begin{proof}
We've defined a starting block to be a grid graph that is efficiently dominated by two towers. Consider some 3D graph $G_{m,n,k}$ and cover it using starting blocks of a certain size. We then have $(t,r)$ dominated $G_{m,n,k}$, since each starting block is dominated. Therefore, we have constructed a dominating set. There are $2$ towers per each starting block, so we know the size of the dominating set, which is $2 \cdot B$, where $B$ is the number of starting blocks used. Since we've constructed a dominating set, we have that \[\gamma_{t,r} (G_{m,n,k}) \leq 2 \cdot B\] as desired.
\end{proof}

\section{\texorpdfstring{$(t,r)$}{} Broadcast Domination of Other Grid Graphs}
\label{LatticeSections}
In this section we study the $(t,r)$ broadcast domination number of slant grid graphs and the king's grid graphs. Through this section we give efficient $(t,r)$ broadcast domination sets for certain values of $t$ and $r$ for both the king's grid graph and the infinite king's lattice. We also leverage previous results of Harris--Luque--Flores--Sepulveda \cite{infinite_triangle} for the infinite triangular lattice to give upper bounds on the $(t,r)$ broadcast domination number of the $m \times n$ slant grid graph.

\subsection{Slant grid graphs}
A slant lattice is a graph similar to the grid graph, but each vertex is adjacent to a vertex diagonal from its position. There are two options for the direction of the slant, we will choose the one pictured in Figure \ref{fig:slant_lattice_ex}.

\begin{figure}[htb!]
    \centering
    \begin{tikzpicture}[scale=0.75]
    \foreach \i in {0,1,2,...,5}{
        \foreach \j in {0,1,2,...,6}{

        \draw[thick,black] (\i,\j)--(\i+1,\j+1);
        \draw[thick,black] (\i,\j)--(\i,\j+1);
        \draw[thick,black] (\i,\j)--(\i+1,\j);
        \fill[black] (\i,\j) circle (3pt);
        }
    }
    \draw[thick, black] (0,7)--(1,7)--(2,7)--(3,7)--(4,7)--(5,7)--(6,7)--(6,6)--(6,5)--(6,4)--(6,3)--(6,2)--(6,1)--(6,0);
    
    \fill[black] (0,7) circle(3pt);
    \fill[black] (1,7) circle(3pt);
    \fill[black] (2,7) circle(3pt);
    \fill[black] (3,7) circle(3pt);
    \fill[black] (4,7) circle(3pt);
    \fill[black] (5,7) circle(3pt);
    \fill[black] (6,7) circle(3pt);
    \fill[black] (6,6) circle(3pt);
    \fill[black] (6,5) circle(3pt);
    \fill[black] (6,4) circle(3pt);
    \fill[black] (6,3) circle(3pt);
    \fill[black] (6,2) circle(3pt);
    \fill[black] (6,1) circle(3pt);
    \fill[black] (6,0) circle(3pt);
    \end{tikzpicture}
    \caption{Slant grid graph of $S_{8,7}$.}
    \label{fig:slant_lattice_ex}
\end{figure}
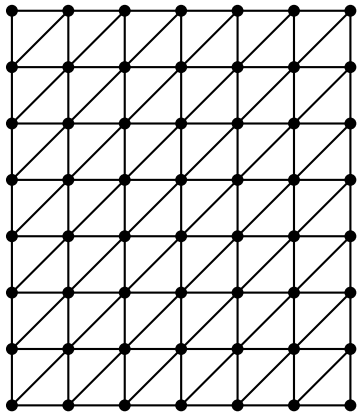
We will let $S_{m,n}$ denote a slant grid graph on $m$ rows and $n$ columns.
We note that the infinite slant lattice is isomorphic to the triangular lattice, and Harris--Luque--Flores--Sepulveda extensively studied the efficient domination of the infinite triangular lattice in \cite{infinite_triangle}. Their paper also examines the $(t,r)$ broadcast domination of the triangular matchstick graph, which is a specific type of subgraph of the infinite triangular lattice. Note that $S_{m,n}$ is also a subgraph of the triangular lattice. We will apply one of the main results of \cite{infinite_triangle}, which we include here as Theorem \ref{harris_coord_thm}, to generate an upper bound for the $(t,r)$ broadcast domination number of $S_{m,n}$.

\begin{theorem}[Theorem 4.1, \cite{infinite_triangle}] \label{harris_coord_thm}
Let $t \geq r \geq 1$. Then an efficient $(t,r)$ broadcast domination pattern for the infinite triangular grid is given by placing a tower at every vertex of the form
\[[(2t-r)x + (t-r)y]\alpha_1 + [tx + (2t-r)y]\alpha_2 \]
with $x,y \in \ZZ$ and $\alpha_1 = (1,0)$ and $\alpha_2 = (-\frac{1}{2}, \frac{\sqrt{3}}{2})$.
\end{theorem}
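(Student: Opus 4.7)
The plan is to verify the stated pattern directly by exploiting translation invariance of the tower sublattice together with the symmetries of the triangular lattice. Since the claim quantifies over all vertices of an infinite graph, the strategy is to reduce the check to a finite computation on one fundamental domain.

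First, I would fix coordinates so that each vertex of the triangular lattice is written uniquely as $(u,w) = u\alpha_1 + w\alpha_2$ with $u, w \in \ZZ$, and record the graph distance
\[
d\bigl(0, (u,w)\bigr) = \begin{cases}\max(|u|,|w|) & \text{if } uw \geq 0, \\ |u| + |w| & \text{if } uw < 0,\end{cases}
\]
which follows from the fact that the edges at the origin go in the six directions $\pm\alpha_1, \pm\alpha_2, \pm(\alpha_1+\alpha_2)$. The tower set is then the sublattice $\Lambda'$ spanned by $\beta_1 = (2t-r,\, t)$ and $\beta_2 = (t-r,\, 2t-r)$ in these coordinates; its index $\bigl|\det[\beta_1\;\beta_2]\bigr| = 3t^2 - 3rt + r^2$ gives the number of vertices per tower in a fundamental domain.

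Next, by translation invariance of the configuration under $\Lambda'$, it suffices to verify the reception condition for the vertices in a single fundamental domain $F$ around the tower at the origin. I would take $F$ to be the Voronoi cell of the origin in the graph metric, which is hexagonal and invariant under the $6$-fold dihedral symmetry inherited from the triangular lattice; this reduces the check to a single wedge of $F$. For each vertex $v$ in this wedge I would compute $f(v)$ by summing $\max(0, t - d(v,w))$ over the finitely many towers at $0, \pm\beta_1, \pm\beta_2, \pm(\beta_1+\beta_2)$ that lie within graph distance $t-1$ of $v$. The verification then consists of two tasks: show $f(v) \geq r$ for every $v \in F$ (domination), and show $f(v) = r$ exactly whenever $v$ lies in two or more broadcast zones (efficiency).

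The main obstacle is the case analysis forced by the piecewise distance formula: relative to each nearby tower, the vertex $v$ can fall in either the \emph{same-sign} region (where the distance is a max) or the \emph{opposite-sign} region (where it is a sum), and these cases must be tracked simultaneously for each of the several nearby towers. The hexagonal symmetry of $F$ prunes this considerably, and the heart of the computation is the algebraic identity showing that along the shared boundaries of adjacent Voronoi cells, the contributions $t - d(v, \beta_i)$ from the relevant towers sum to exactly $r$ — which is precisely what the choice of basis $\beta_1, \beta_2$ is engineered to achieve.
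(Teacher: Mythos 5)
This statement is quoted verbatim from Harris--Luque--Flores--Sepulveda \cite{infinite_triangle}; the present paper gives no proof of it, so there is no in-paper argument to compare against. Judged on its own terms, your proposal has the right overall shape (pass to $(u,w)$-coordinates, use the correct piecewise distance formula, exploit translation invariance of the tower sublattice $\Lambda'$ to reduce to a fundamental domain, then do a finite check), but it is a plan rather than a proof: the step you yourself identify as ``the heart of the computation'' --- the identity showing the contributions from adjacent towers sum to exactly $r$ on the overlaps --- is never carried out, and that identity is the entire content of the theorem.

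There are also two concrete errors in the setup that would make the finite check fail as written. First, the towers adjacent to the origin in $\Lambda'$ are $\pm\beta_1$, $\pm\beta_2$, and $\pm(\beta_1-\beta_2)$, all at graph distance $2t-r$; the pair $\pm(\beta_1+\beta_2)$ that you list instead sits at distance $3t-r\geq 2t$ and can never share a covered vertex with the origin, whereas the omitted pair $\pm(\beta_1-\beta_2)=\pm(t,\,r-t)$ is precisely where critical overlaps occur. For example, with $(t,r)=(3,2)$ the vertex $(2,0)$ is served only by the towers at $(0,0)$ and at $\beta_1-\beta_2=(3,-1)$, each contributing $1$; summing over your list of towers would return $f((2,0))=1<r$ and wrongly reject the pattern. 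Second, $\Lambda'$ is invariant under the order-$6$ rotation $(u,w)\mapsto(u-w,u)$ (which sends $\beta_1\mapsto\beta_2\mapsto\beta_2-\beta_1$) but \emph{not} under reflections --- for $(t,r)=(2,1)$ the reflection of $\beta_1=(3,2)$ is $(1,-2)\notin\Lambda'$ --- so the symmetry group available is cyclic, not dihedral, and the further reduction to half a wedge is not justified.
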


Theorem \ref{harris_coord_thm} was stated for the infinite triangular grid, where the interior triangles are equilateral. However, if we let $\alpha_1=(-1,0)$ and $\alpha_2=(1,1)$, then Theorem \ref{harris_coord_thm} describes the placement of towers on $S_{m,n}$. Since we are thinking of $S_{m,n}$ as a subgraph of the infinite triangular lattice, we will always assume that its bottom left corner is at the origin. We will also assume that this vertex is always a dominating vertex, as outlined in \cite{infinite_triangle}. Note that a tower in $S_{m,n}$ has as a hexagonal broadcast zone.

\begin{lemma} \label{start_tile_row}
A single tower at the origin can dominate $S_{n,n}$, if $n= t-r+1$. 
\end{lemma}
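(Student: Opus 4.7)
The plan is to verify that a single tower placed at the origin delivers signal of at least $r$ to every vertex of $S_{n,n}$, which reduces to establishing that every vertex of $S_{n,n}$ lies within graph distance $t - r$ of the origin.

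First I would compute the graph distance from $(0,0)$ to an arbitrary vertex $(i,j)$ of $S_{n,n}$. Because the edges out of $(i,j)$ go to $(i+1,j)$, $(i,j+1)$, and $(i+1,j+1)$, any walk from the origin to $(i,j)$ consisting of $a$ horizontal, $b$ vertical, and $c$ diagonal steps must satisfy $a + c = i$ and $b + c = j$, so its length is $a + b + c = i + j - c$. This is minimized by taking $c = \min(i,j)$, giving the candidate distance $d((0,0),(i,j)) = \max(i,j)$. I would then verify that this is genuinely minimal by the simple observation that a single edge increases the quantity $\max(i,j)$ by at most one, so any walk reaching $(i,j)$ must take at least $\max(i,j)$ steps.

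With the distance formula in hand, the hypothesis $n = t - r + 1$ forces every vertex $(i,j)$ of $S_{n,n}$ to satisfy $0 \le i, j \le n - 1 = t - r$, so $d((0,0),(i,j)) \le t - r$. The signal received at $(i,j)$ from the tower at the origin is therefore $t - d((0,0),(i,j)) \ge t - (t-r) = r$, so the singleton $\{(0,0)\}$ is indeed a $(t,r)$ broadcast dominating set of $S_{n,n}$. I do not anticipate any serious obstacle; the only thing requiring care is pinning down the slant-graph distance formula, which the paragraph above handles in a few lines.
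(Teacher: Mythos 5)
Your proof is correct and follows essentially the same route as the paper's, which simply asserts that every vertex of the $(t-r+1)\times(t-r+1)$ grid lies within distance $t-r$ of the origin. You additionally make explicit the slant-lattice distance formula $d\bigl((0,0),(i,j)\bigr)=\max(i,j)$ together with its lower-bound justification, a detail the paper leaves implicit.
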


\begin{proof}
The distance from the origin, which is a broadcasting tower, to the farthest vertex that can be $(t,r)$ dominated by that tower is $t-r$. That implies a grid that is $t-r+1$ by $t-r+1$ vertices is dominated by a single tower at the origin.
\end{proof}

In order to construct $(t,r)$ broadcast dominating sets for the $m \times n$ slant grid graph we introduce the notion of a \textbf{starting tile}. A $(t,r)$ starting tile is an $m \times n$ slant grid graph, where $m = t-r+1$ and $n$ is the number of columns required so that the last tower dominating the tile is at $(p,0)$, for some $p \in \mathbb N$. See Figure \ref{fig:(2,1)_slant_example} for an example.

When looking for a good starting tile, Lemma \ref{start_tile_row} dictates the number of rows of the tile. Then, using Theorem \ref{harris_coord_thm}, we can generate upper bounds for $\gamma_{t,r} (S_{m,n})$ for certain values of $(t,r)$.

\begin{figure}[htb!]
    \centering
    \tikzset{every picture/.style={line width=0.75pt}} 

\begin{tikzpicture}[x=0.75pt,y=0.75pt,yscale=-1,xscale=1]

\draw  [fill={rgb, 255:red, 137; green, 233; blue, 134 }  ,fill opacity=0.47 ] (130.5,100) -- (169.5,100) -- (169.5,100) -- (169.5,130.5) -- (139,161) -- (100,161) -- (100,161) -- (100,130.5) -- cycle ;
\draw  [fill={rgb, 255:red, 0; green, 0; blue, 0 }  ,fill opacity=1 ] (131.75,130.5) .. controls (131.75,128.84) and (133.09,127.5) .. (134.75,127.5) .. controls (136.41,127.5) and (137.75,128.84) .. (137.75,130.5) .. controls (137.75,132.16) and (136.41,133.5) .. (134.75,133.5) .. controls (133.09,133.5) and (131.75,132.16) .. (131.75,130.5) -- cycle ;
\draw  [fill={rgb, 255:red, 137; green, 233; blue, 134 }  ,fill opacity=0.47 ] (221.5,70) -- (260.5,70) -- (260.5,70) -- (260.5,100.5) -- (230,131) -- (191,131) -- (191,131) -- (191,100.5) -- cycle ;
\draw  [fill={rgb, 255:red, 0; green, 0; blue, 0 }  ,fill opacity=1 ] (222.75,100.5) .. controls (222.75,98.84) and (224.09,97.5) .. (225.75,97.5) .. controls (227.41,97.5) and (228.75,98.84) .. (228.75,100.5) .. controls (228.75,102.16) and (227.41,103.5) .. (225.75,103.5) .. controls (224.09,103.5) and (222.75,102.16) .. (222.75,100.5) -- cycle ;
\draw  [fill={rgb, 255:red, 137; green, 233; blue, 134 }  ,fill opacity=0.47 ] (261.5,131) -- (300.5,131) -- (300.5,131) -- (300.5,161.5) -- (270,192) -- (231,192) -- (231,192) -- (231,161.5) -- cycle ;
\draw  [fill={rgb, 255:red, 0; green, 0; blue, 0 }  ,fill opacity=1 ] (262.75,161.5) .. controls (262.75,159.84) and (264.09,158.5) .. (265.75,158.5) .. controls (267.41,158.5) and (268.75,159.84) .. (268.75,161.5) .. controls (268.75,163.16) and (267.41,164.5) .. (265.75,164.5) .. controls (264.09,164.5) and (262.75,163.16) .. (262.75,161.5) -- cycle ;
\draw  [fill={rgb, 255:red, 137; green, 233; blue, 134 }  ,fill opacity=0.47 ] (311.5,39) -- (350.5,39) -- (350.5,39) -- (350.5,69.5) -- (320,100) -- (281,100) -- (281,100) -- (281,69.5) -- cycle ;
\draw  [fill={rgb, 255:red, 0; green, 0; blue, 0 }  ,fill opacity=1 ] (312.75,69.5) .. controls (312.75,67.84) and (314.09,66.5) .. (315.75,66.5) .. controls (317.41,66.5) and (318.75,67.84) .. (318.75,69.5) .. controls (318.75,71.16) and (317.41,72.5) .. (315.75,72.5) .. controls (314.09,72.5) and (312.75,71.16) .. (312.75,69.5) -- cycle ;
\draw  [fill={rgb, 255:red, 137; green, 233; blue, 134 }  ,fill opacity=0.47 ] (351.5,100) -- (390.5,100) -- (390.5,100) -- (390.5,130.5) -- (360,161) -- (321,161) -- (321,161) -- (321,130.5) -- cycle ;
\draw  [fill={rgb, 255:red, 0; green, 0; blue, 0 }  ,fill opacity=1 ] (352.75,130.5) .. controls (352.75,128.84) and (354.09,127.5) .. (355.75,127.5) .. controls (357.41,127.5) and (358.75,128.84) .. (358.75,130.5) .. controls (358.75,132.16) and (357.41,133.5) .. (355.75,133.5) .. controls (354.09,133.5) and (352.75,132.16) .. (352.75,130.5) -- cycle ;
\draw  [fill={rgb, 255:red, 137; green, 233; blue, 134 }  ,fill opacity=0.47 ] (170.5,160) -- (209.5,160) -- (209.5,160) -- (209.5,190.5) -- (179,221) -- (140,221) -- (140,221) -- (140,190.5) -- cycle ;
\draw  [fill={rgb, 255:red, 0; green, 0; blue, 0 }  ,fill opacity=1 ] (171.75,190.5) .. controls (171.75,188.84) and (173.09,187.5) .. (174.75,187.5) .. controls (176.41,187.5) and (177.75,188.84) .. (177.75,190.5) .. controls (177.75,192.16) and (176.41,193.5) .. (174.75,193.5) .. controls (173.09,193.5) and (171.75,192.16) .. (171.75,190.5) -- cycle ;

\end{tikzpicture}
    \caption{An efficient $(2,1)$ dominating pattern for the slant lattice.}
    \label{fig:slantpatternexample}
\end{figure}
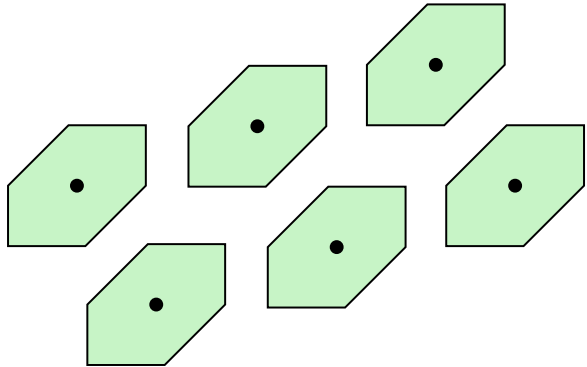
If we take a slice of the slant lattice with an efficient dominating pattern, as depicted in Figure \ref{fig:slantpatternexample} such that the bottom corners of the slice are towers, we get a starting tile, which is depicted in Figure \ref{fig:(2,1)_slant_example}. The red edges depict the broadcast zone of five towers, three of which are on the tile, and the red vertices depict the towers.
\begin{figure}[htb!]
    \centering
    \begin{tikzpicture}[scale=.5]
    \draw[semithick, black] (0,0)--(4,0)--(6,2)--(8,2);
    \draw[semithick, black] (0,2)--(0,0);
    \draw[semithick, black] (2,2)--(4,2)--(6,2);
    \draw[semithick, black] (0,0)--(2,2);
    \draw[semithick, black] (6,0)--(6,2);
    \draw[semithick, black] (8,0)--(8,2)--(10,2);
    \draw[semithick, black] (12,2)--(14,2)--(14,0)--(10,0)--(10,2);
    \draw[semithick, black] (12,0)--(12,2);
    
    \draw[semithick, black] (2,0)--(4,2);

    \draw[semithick, black] (8,0)--(10,2);
    \draw[semithick, black] (10,0)--(12,2);
    \draw[semithick, black] (6,0)--(8,0);

    \draw[ultra thick, green] (8,0)--(10,0);
    \draw[very thin ,black, dashed] (8,0)--(10,0);
    
    \draw[ultra thick, green] (10,2)--(12,2);
    \draw[very thin ,black, dashed] (10,2)--(12,2);
    \draw[ultra thick, green] (2,2)--(0,2);
    \draw[very thin ,black, dashed] (2,2)--(0,2);
    \draw[ultra thick, green] (4,2)--(4,0)--(6,0)--(8,2);
    \draw[very thin ,black, dashed] (4,2)--(4,0)--(6,0)--(8,2);
    \draw[ultra thick, green] (2,0)--(2,2);
    \draw[very thin ,black, dashed] (2,0)--(2,2);

    \draw[ultra thick, green] (12,0)--(14,2);
    \draw[very thin ,black, dashed] (12,0)--(14,2);

    \fill[green] (0,0) circle (5pt);
    \fill[black] (2,0) circle (3pt);
    \fill[black] (4,0) circle (3pt);
    \fill[black] (6,0) circle (3pt);
    \fill[black] (8,0) circle (3pt);
    \fill[black] (10,0) circle (3pt);
    \fill[black] (12,0) circle (3pt);
    \fill[green] (14,0) circle (5pt);

    \fill[black] (0,2) circle (3pt);
    \fill[black] (2,2) circle (3pt);
    \fill[black] (4,2) circle (3pt);
    \fill[green] (6,2) circle (5pt);
    \fill[black] (8,2) circle (3pt);
    \fill[black] (10,2) circle (3pt);
    \fill[black] (12,2) circle (3pt);
    \fill[black] (14,2) circle (3pt);

    \end{tikzpicture}
    \caption{A $(2,1)$ starting tile of the slant grid graph.}
    \label{fig:(2,1)_slant_example}
\end{figure}
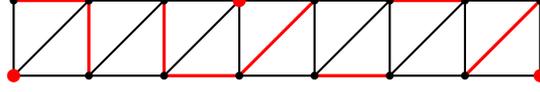

\begin{theorem} \label{2,1_slant_tiling}
For $S_{m,n}$, if $m= 2p$ and $n=8q$, where $p,q \in \mathbb{N}$, then \[\gamma_{2,1} (S_{m,n}) \leq (4q+1)p. \]
If $m = 2p + \ell$ for $\ell \in \{0,1\}$ and $n = 8q + k$ for $k \in \{1, 2, ...,7\}$, then \[\gamma_{2,1} (S_{m,n}) \leq (4(q+1)+1)(p+1) \]
if either $\ell$ and $k$ are nonzero.
\end{theorem}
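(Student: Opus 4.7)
The plan is to construct an explicit $(2,1)$ broadcast dominating set for $S_{m,n}$ by tiling with copies of the $(2,1)$ starting tile from Figure~\ref{fig:(2,1)_slant_example}, which arises from the efficient domination pattern on the infinite triangular (slant) lattice given by Theorem~\ref{harris_coord_thm} with $t=2$ and $r=1$, together with the convention recalled before Lemma~\ref{start_tile_row} that the bottom-left vertex of $S_{m,n}$ is itself a broadcasting tower. Placing the origin of the Theorem~\ref{harris_coord_thm} pattern at this corner lines up the periodic tower placement with the starting tile of Figure~\ref{fig:(2,1)_slant_example}.

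First I would treat the exact case $m=2p$, $n=8q$. Here I tile $S_{2p,8q}$ by $pq$ copies of the $2\times 8$ starting tile arranged into $p$ horizontal strips of $q$ tiles each. Theorem~\ref{harris_coord_thm} guarantees that the resulting tower placement extends the efficient infinite-lattice pattern, hence dominates every vertex of $S_{2p,8q}$. For the count, I would sum towers strip by strip: inside each horizontal strip of $q$ consecutive tiles, a careful bookkeeping of the tower vertices shared between neighboring tiles (the bottom-corner towers are identified with those of the adjacent tiles, while the top middle towers are internal to each tile) yields exactly $4q+1$ towers per strip, and summing across the $p$ stacked strips gives $(4q+1)p$.

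Next, for the general case $m=2p+\ell$, $n=8q+k$ with $\ell\in\{0,1\}$ and $k\in\{0,\ldots,7\}$ both nonzero, I would embed $S_{m,n}$ as an induced subgraph of the larger slant grid $S_{2(p+1),\,8(q+1)}$, placed so that $S_{m,n}$ occupies the bottom-left corner. Applying the exact-case construction to this enlargement yields a $(2,1)$ dominating set of size $(4(q+1)+1)(p+1)$. Since $(2,1)$ broadcast domination only requires that every vertex have a tower within distance $1$, the intersection of this set with $V(S_{m,n})$, together with a minor relocation of any tower in the extra row or column of tiles whose broadcast zone was needed to cover a boundary vertex of $S_{m,n}$, still provides a $(2,1)$ dominating set of $S_{m,n}$ of size at most $(4(q+1)+1)(p+1)$.

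The principal obstacle is the careful tower-counting on each strip, in particular confirming the $4q+1$ tally by tracking precisely which towers on the vertical and horizontal tile boundaries are shared, as well as verifying that the induced-subgraph argument in the general case produces a legitimate dominating set of $S_{m,n}$ itself — not merely of its enlargement — so that the stated bound $(4(q+1)+1)(p+1)$ is really attained inside $S_{m,n}$.
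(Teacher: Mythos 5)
Your proposal is correct and follows essentially the same route as the paper: tile $S_{m,n}$ with copies of the $2\times 8$ starting tile cut from the infinite-lattice pattern of Theorem~\ref{harris_coord_thm}, count $4q+1$ towers per horizontal strip after identifying the shared bottom-corner towers, and handle nonzero remainders $\ell,k$ by padding to $p+1$ strips of $q+1$ tiles. If anything, your treatment of the remainder case (explicitly embedding $S_{m,n}$ in $S_{2(p+1),8(q+1)}$ and relocating the few towers that land outside $V(S_{m,n})$) is more careful than the paper's one-line dismissal of that step.
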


\begin{proof}
Taking a $2\times 8$ slice of the the efficient domination pattern from Theorem \ref{harris_coord_thm} on the infinite lattice, we generate a $2 \times 8$ starting tile, depicted in Figure \ref{fig:(2,1)_slant_example}. This starting tile has the first tower at $(0,0)$ and the last tower at $(7,0)$, and there are $5$ total towers required to dominate this grid graph on the lattice. Therefore, we can form a dominating set by moving these towers onto the tile and the $(2,1)$ domination number of the starting tile is at most $5$. We want to use this tile to dominate an $m \times n$ grid and produce an upper bound for $\gamma_{t,r}(S_{m,n})$.  First, when tiling horizontally, we have at most $5$ towers for every $8$ columns of vertices. This implies that if $n= 8q$, the $(2,1)$ domination number of $S_{2,n}$ is bounded by $5q$.

 Since there is a tower in both the bottom left and right corners of a $2\times 8$ tile, adding an additional $2\times 8$ tile horizontally overcounts the number of towers by one. Thus, the domination number of $S_{2,n}$ can be bounded by $5q - (q-1) = 4q + 1$. We then look at tiling vertically. The tile dominates $2$ rows of vertices, so we only need to place a tile for every $2$ rows; therefore, if $m=2p$, we conclude that the $(2,1)$ domination number of $S_{m,n}$ is bounded by $(4q+1)p$, as desired.

If $m = 2p + \ell$ for $\ell = 1$ and $n = 8q + k$ for $k \in \{1, 2, ...,7\}$, we need additional tiles to dominate the remaining columns. We do this by using $q+1$ and $p+1$ in the formula above, giving the second bound. \end{proof}

We give the following bounds for $t$, $r$, $m$, and $n$ using the same methods outlined in Theorem \ref{2,1_slant_tiling} in Table \ref{tbl:upper_slant_bounds}.

\begin{table}[htb!]
\label{upper_bound_table}
\centering
\scalebox{.9}{
\begin{tabular}{|c|c|c|p{1.2in}|p{1.2in}|}
    \hline
     $(t,r)$ & $m$ & $n$ & Upper bound for $\gamma_{t,r}(S_{m,n})$ when $k, \ell = 0$ & Upper bound for $\gamma_{t,r}(S_{m,n})$ when $k, \ell > 0$  \\
     \hline\hline
     $(2,1)$ & $2p + \ell$, $\ell \in \{0,1\}$ & $8q + k, k \in \{0,...,7\}$ & $(4q+1)p$& $(4q+5)(p+1)$ \\
     \hline
     $(3,1)$ & $3p + \ell$, $\ell \in \{0,1,2\}$& $20q + k, k \in \{0,...,19\}$ & $(7q+1)p$& $(7q + 8)(p+1)$ \\
     \hline
     $(3,2)$ & $2p + \ell$, $\ell \in \{0,1\}$& $14q + k, k \in \{0,...,13\}$ &$(6q+1)p$& $(6q+7)(p+1)$\\
     \hline
     $(4,2)$ & $3p + \ell$, $\ell \in \{0,1,2\}$& $15q + k, k \in \{0,...,14\}$ &$(14q+1)p$& $(14q+15)(p+1)$ \\
     \hline
     $(4,3)$ & $2p + \ell, \ell \in \{0,1\}$ & $22q + k, k \in \{0,...,21\}$ &$(8q+1)p$& $(8q+9)(p+1)$ \\
     \hline
     $(5,4)$ & $2p + \ell, \ell \in \{0,1\}$& $32q + k, k \in \{0,...,31\}$ &$(10q+1)p$ & $(10q+11)(p+1)$\\
    \hline
\end{tabular}}
\caption{Upper bounds for $\gamma_{t,r}(S_{m,n})$.}
\label{tbl:upper_slant_bounds}
\end{table}

Further work can be done to tighten the upper bounds given above, and generate more bounds for more choices of $(t,r)$, $m$, and $n$.
We can also use our starting tile strategy to find the $(t,r)$ broadcast domination number for $S_{2,n}$, instead of an upper bound. That process is detailed below. 

\begin{lemma}
If $n \leq 2(t-r)$ and $t > r$, then  $\gamma_{t,r} (S_{2,n}) = 1$. 
\end{lemma}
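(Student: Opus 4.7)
The plan is to generalize the proof of the preceding lemma, which handles the case $r=1$, by a reception count that exploits the asymmetry of the slant edge in $S_{2,n}$. I would place a single broadcast tower in the bottom row of $S_{2,\,2(t-r)}$ and verify that every vertex receives signal at least $r$. This immediately gives $\gamma_{t,r}(S_{2,n})\leq 1$ for every $n\leq 2(t-r)$, since any such $S_{2,n}$ appears as a subgraph of $S_{2,\,2(t-r)}$ on which the same tower placement remains valid.

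The key geometric step is to record the distances from a tower at column $c$, bottom row. Because of the slant edge joining $(c,1)$ to $(c+1,2)$, every top-row vertex to the right of the tower sits at the same distance from the tower as the bottom-row vertex in its column, so $(c+k,1)$ and $(c+k,2)$ are both at distance $k$ for $k\geq 1$. To the left no such edge is available, so while $(c-k,1)$ is at distance $k$, its top-row neighbor $(c-k,2)$ is at distance $k+1$. Translating these distances into reception via $f(v)=t-d(v,w)$, to the right both rows are covered out to distance $t-r$, contributing $t-r$ columns beyond the tower; the tower's own column adds one more; and to the left the top row is the binding constraint, yielding $t-r-1$ additional columns.

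Summing, a single tower dominates $(t-r-1)+1+(t-r)=2(t-r)$ columns, with reception exactly $r$ attained at the leftmost top-row vertex and at both vertices of the rightmost column. Hence placing the tower in column $t-r$ of $S_{2,\,2(t-r)}$ is a valid $(t,r)$ broadcast. For $n\leq 2(t-r)$, restricting this placement to the first $n$ columns still covers $S_{2,n}$, so $\gamma_{t,r}(S_{2,n})\leq 1$. Combining with the trivial lower bound $\gamma_{t,r}(S_{2,n})\geq 1$, which holds since $r\geq 1$ and the graph is nonempty, the lemma follows.

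The only real subtlety, as in the preceding lemma, is the slant-edge distance bookkeeping: one must remember that the one-step discrepancy between the two rows occurs only on the left of the tower, not on the right. Once this is correctly recorded, the rest of the argument is routine arithmetic with reception values.
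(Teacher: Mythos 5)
Your proof is correct and follows essentially the same approach as the paper: place a single tower in the bottom row, use the slant edge to show both rows to the right are covered out to $t-r$ columns while the left side only reaches $t-r-1$ columns (top row being the binding constraint), and sum to $2(t-r)$. Your distance bookkeeping is in fact slightly more explicit than the paper's, but the argument is the same.
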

\begin{proof}
We place the broadcast tower anywhere on the bottom row, and then count the maximum number of columns that are $(t,r)$ dominated by that single broadcast tower. 

To the right of the broadcast tower, the slant diagonal forces vertices in the bottom and top row that are adjacent to the same vertex to have the same reception. So, both vertices in the last column have reception $r$. Therefore, there are $t-r$ columns to the right of the broadcast tower.

On the left side of the tower, signal spreads as it would in a grid, so the vertex in the first column, bottom row will have reception $r+1$, meaning there are $t - r - 1$ columns to the left of the broadcasting tower. 

The total number of columns that are $(t,r)$ dominated by this single tower vertex, including the column the tower is located in, is given by $n = t - r + t  - r - 1 + 1 = 2(t-r)$, as desired.
\end{proof}

\begin{corollary}
For $S_{2,n}$, if $n \leq 2t-2$, then $\gamma_{t,1}(S_{2,n}) = 1$. 
\end{corollary}

\begin{theorem} \label{slant_2xn_start_block}
If $n = 4t-3r$ and $t > r$, placing towers on the top row, $(t-r+1)^{th}$ column and the bottom row, $(n-(t-r))^{th}$ column, creates an efficient dominating set, and $\gamma_{t,r} (S_{2, n}) = 2$.
\end{theorem}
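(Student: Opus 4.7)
The plan is to prove $\gamma_{t,r}(S_{2,n}) = 2$ by combining a lower bound from the previous lemma with an explicit efficient two-tower construction. For the lower bound, note that $t > r$ gives $2t - r > 0$, and hence $n = 4t - 3r > 2(t-r)$. The immediately preceding lemma, which caps the coverage of a single tower at $2(t-r)$ columns, therefore forces $\gamma_{t,r}(S_{2,n}) \geq 2$.

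For the matching upper bound, the plan is to place $T_1$ in the top row at the $(t-r+1)$st column and $T_2$ in the bottom row at the corresponding column counted from the right, so that the configuration is invariant under the reflection of $S_{2,n}$ that reverses columns and swaps rows. The key geometric fact is that the slant edges of $S_{m,n}$ join $(x,1)$ to $(x+1,2)$, giving $T_1 = (t-r+1, 2)$ a one-step diagonal shortcut down-left to $(t-r,1)$ but costing an extra step to reach vertices down-right. From this one derives the asymmetric distance formulas
\[
d(T_1, (x, 1)) = \begin{cases} t-r+1-x, & x \leq t-r, \\ x-(t-r), & x \geq t-r+1, \end{cases} \qquad d(T_1, (x, 2)) = |x-(t-r+1)|,
\]
together with the analogous mirror-image formulas for $T_2$.

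Substituting these into $f_i(v) = \max\{0, t - d(T_i, v)\}$ and splitting cases by row and by the position of $v$'s column relative to the two tower columns, one verifies that the piecewise-linear signal sum telescopes to exactly $r$ inside an overlap strip of width $r-1$ in each row, stays strictly greater than $r$ on the outer sides, and equals $r$ at the boundary columns where one of the two contributions drops to zero. This simultaneously establishes both the coverage condition $f_1 + f_2 \geq r$ and the efficiency condition (equality in the overlap).

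The main obstacle is that the slant direction breaks the left/right symmetry between the two rows, so the top-row overlap strip is offset by one column from the bottom-row strip; the two rows therefore cannot be collapsed into a single computation by symmetry, and the four boundary columns $(2t-2r+1, 2)$, $(2t-r+1, 2)$, $(2t-2r, 1)$, $(2t-r, 1)$ require individual verification that the nonzero contribution from the one active tower equals exactly $r$.
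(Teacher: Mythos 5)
Your proposal is correct, and it takes a genuinely different route from the paper. The paper proves this theorem by a double induction (first on $t$ with $r=1$, then on $r$ for fixed $t$), arguing informally at each step about how far the towers must be shifted when a parameter changes; it never explicitly establishes the lower bound $\gamma_{t,r}(S_{2,n})\geq 2$, and the efficiency claim is asserted rather than computed. You instead give a direct argument: the lower bound follows from $4t-3r>2(t-r)$ together with the preceding lemma's cap on single-tower coverage, and the upper bound plus efficiency follow from explicit distance formulas in $S_{2,n}$. I verified your formulas (using the invariant $u=\mathrm{column}-\mathrm{row}$, which each edge type changes by at most $1$, one gets exactly $d(T_1,(x,1))=t-r+1-x$ for $x\leq t-r$ and $x-(t-r)$ for $x\geq t-r+1$, with the mirror versions for $T_2$), and the telescoping of the two contributions to exactly $r$ on the overlap strips checks out: with $T_2$ placed symmetrically at column $3t-2r$ the top-row overlap is $[2t-2r+2,\,2t-r]$ and the bottom-row overlap is $[2t-2r+1,\,2t-r-1]$, each of width $r-1$ and offset by one as you predict. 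One caveat you have implicitly (and correctly) resolved: the theorem's phrase ``the $(n-(t-r+1))^{\text{th}}$ column'' is off by one from the paper's own Figure (the $(2,1)$ base case puts the second tower in column $4=n-(t-r+1)+1$ of the $2\times 5$ grid); your reflection-symmetric placement at column $n-(t-r)$ is the one that actually works and matches the figure. Your approach buys a rigorous, checkable proof and an honest lower bound at the cost of some casework; the paper's induction is shorter but leaves the key reception counts unverified. To make your argument airtight you should note that the single-tower cap of $2(t-r)$ columns is established in the \emph{proof} of the preceding lemma rather than in its statement, so either cite that computation or rederive it in one line.
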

\begin{proof}
\begin{figure}[htb!]
    \centering
    
    \[\begin{tikzpicture}[scale=1.5]
    \foreach \i in {0,1,2,3}{
    \foreach \j in {0}{
    \draw[thick,gray] (\i,\j)--(\i+1,\j+1);
    \draw[thick,gray] (\i,\j)--(\i,\j+1);
    \draw[thick,gray] (\i,\j)--(\i+1,\j);
    \draw[thick, gray] (0,1) -- (1,1) -- (2,1) -- (3,1) -- (4,1) -- (4,0);
    \fill[black] (\i,\j) circle (3pt);
    \fill[black] (\i,\j+1) circle (3pt); 
    \fill[black] (\i+1,\j) circle (3pt);
    \fill[black] (\i+1,\j+1) circle (3pt); 
    }
    }
    \fill[black] (0,1) circle (3pt);
    \fill[black] (2,1) circle (3pt);

    \fill[red] (1,1) circle (3pt);
    \fill[red] (3,0) circle (3pt);
    \node[black] at (.9,1.27) {\Large{$v$}};
    \node[red] at (1.2,.8) {$2$};
    \node[black] at (2.9, -.27) {\Large{$w$}};
    \node[red] at (2.85, .3) {$2$};
    
    \end{tikzpicture} \]

    \caption{Slant grid graph dominated by 2 towers, for $(2,1)$.}
    \label{fig:slant_2xn_startblock_base}
\end{figure}
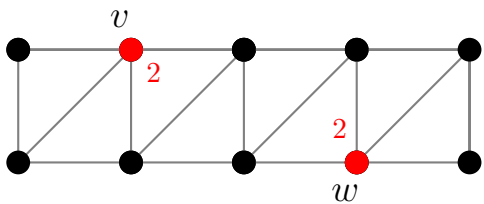

We construct the dominating set on $S_{2,n}$ as shown in the example in Figure~\ref{fig:slant_2xn_startblock_base}, where there is a broadcasting tower, $v$, in the top row, $(t-r+1)^{th}$ column, and a second broadcast tower, $w$, in the bottom row, $(n - (t-r))^{th}$ column. 

We then show that when $\{v,w\}$ forms an efficient dominating set for $S_{m,n}$, it must be that $n = 4t-3r$. To establish this we use a proof by induction.

Let $(2,1)$ be our base case. As shown in Figure \ref{fig:slant_2xn_startblock_base}, we see that the $2 \times 5$ slant grid graph is efficiently dominated by two broadcasting towers, and $5 = 4(2) - 3(1)$. 
So, we assume that $n = 4k-3r$ for all $(k,1)$ with $2\leq k\leq t$. 
We must now show that $n = 4(t+1) - 3$ for $(t+1,1)$. 

By increasing the strength of the broadcasting tower to $t+1$, $n$ must be larger so we can continue to have an efficient dominating set. As a result of increasing the strength to $t+1$, vertices that were distance $t-r$ from a tower now have a signal of $3$, where reception $2$ is from the tower $t-r$ away from that vertex, and reception $1$ is from the other tower, which is $t-r+1$ away. For example, if the towers in Figure \ref{fig:slant_2xn_startblock_base} were strength $3$, the vertices in the third column would have reception $3$, $2$ from the tower they are adjacent to and $1$ from the other tower. So, we can shift the broadcasting towers away from each other. 
We move the one in the top row a column to the left, and shift the second tower a column to the right. This fixes the overlap inside of the grid graph, but the grid graph could have more columns. Let's look at the first broadcasting tower. The vertex we shifted the tower onto used to have reception $t-1$ but now has reception $t+1$, since it is the new tower location. That means its reception has increased by $2$ as we moved between the $(t,1)$ and the $(t+1, 1)$ case. So, there can be $2$ more columns to the left of this broadcasting tower in the grid graph, because vertices in the top and bottom row will have the same reception with the slant. We see a similar result with the second broadcasting tower, so there can also be two more columns added to the right of the second broadcasting tower. So, we have $4$ more columns than we did in the $(t,1)$ case; therefore $n = 4t - 3 + 4 = 4t + 1 = 4(t+1)-3$, as desired.

Now that we have shown that $n = 4t -3$ for all $(t,1)$, we now show $n = 4t -3r$ for all $(t,r)$. Assume that $n = 4t - 3r$ for some fixed $t$. We want to show for $(t, r+1)$ that $n = 4t - 3(r+1)$. Since more reception is required, the towers must be closer together than they were before. 
We keep the position of the first tower fixed and move the second tower one column to the left, so vertices in between the towers have at least reception $r+1$. We then consider the new size of the efficiently dominated grid. 
The vertices in the first column have reception $r$, so we do not want this column in our grid anymore. That means we have one less column to the left of the first broadcast tower. 
Then, we look at the second broadcast tower. 
We already have one less column because we moved the second tower to the left, but we still have a column to the right of the broadcast tower where both vertices have reception $r$. By removing that column, we now have a grid graph with $3$ less columns than before, so for $(t,r+1)$, $n = 4t - 3r - 3 = 4t - 3(r+1)$, as desired. 

Therefore, if $n = 4t - 3r$ and $t > r$, there exists an efficient broadcast dominating set on $S_{2,n}$ with $2$ towers.
\end{proof}

\slantdom*

\begin{proof}
We first construct a broadcast dominating set with cardinality $\left \lceil \frac{2(n+r-1)}{4t-2r-1} \right \rceil$. We use the starting block construction in Theorem \ref{slant_2xn_start_block}. 

\begin{figure}[htb!]
    \centering
    \begin{tikzpicture} 
    
    \draw[thick, black] (0,0)--(0,2);
    \draw[thick, black] (7,0)--(5,0)--(5,2)--(7,2)--cycle;
    \draw[thick, black] (5,0)--(7,2);
    \draw[thick, black] (13,0)--(15,0)--(15,2)--(13,2)--cycle;
    \draw[thick, black] (13,0)--(15,2);
    \draw[dashed, black] (0,0)--(5,0);
    \draw[dashed, black] (0,2)--(5,2);
    \draw[dashed, black] (7,0)--(13,0);
    \draw[dashed, black] (7,2)--(13,2);
    
    \draw[dashed, black] (5,2)--(3,0);
    \draw[dashed, black] (0,0)--(2,2);
    \draw[dashed, black] (7,0)--(9,2);
    \draw[dashed, black] (13,2)--(11,0);
    \draw[dashed, black] (11,2)--(9,0);
    
    \fill[black] (0,0) circle (3pt); 
    \node[black] at (0,-.35) {$r$};
    
    \fill[black] (5,0) circle (3pt); 
    \node[black] at (5,-.35) {$t-1$};
    
    \fill[black] (7,0) circle (3pt); 
    \node[black] at (7,-.35) {$t-1$};
    
    \fill[black] (13,0) circle (3pt); 
    \node[black] at (13,-.35) {$1$};
    
    \fill[black] (15,0) circle (3pt); 
    
    \fill[black] (0,2) circle (3pt); 
    \node[black] at (0,2.35) {$r$};
    
    \fill[black] (5,2) circle (3pt); 
    \node[black] at (5,2.35) {$t-1$};
    
    \fill[red] (7,2) circle (3pt); 
    \node[red] at (7,2.35) {$t$};
    
    \fill[black] (13,2) circle (3pt); 
    \node[black] at (13,2.35) {$2$};
    
    \fill[black] (15,2) circle (3pt); 
    \node[black] at (15,2.35) {$1$};
    
    \end{tikzpicture}
    \caption{Beginning of the constructed dominating set.}
    \label{fig:slant_dom_construction}
\end{figure}
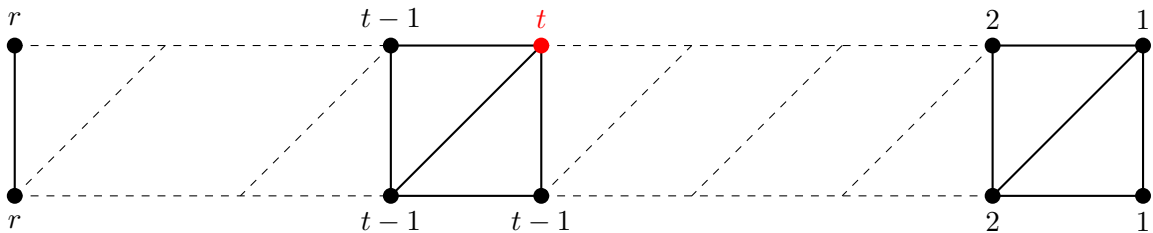

We place our first tower on the top row and $(t-r+1)^{th}$ column, as shown in Figure \ref{fig:slant_dom_construction}. All the vertices to the left of the tower have at least reception $r$, so we only need to continue placing towers to the right of the broadcast tower. To waste as little signal as possible, we add a broadcasting tower in the bottom row, in the  and construct our starting block.
Therefore, we have $4t-3r$ columns of $S_{2,n}$ broadcast dominated. To continue dominating the grid, we want to continue using starting blocks, while allowing starting blocks to overlap so that the vertices between them receive sufficient signal.
The number of such columns between any two broadcast zones is given by $r-1$, which we can conclude from Lemma \ref{vertex_overlap}. To account for this, we partition the grid into blocks with $4t - 3r + r - 1 = 4t - 2r - 1$ columns, except for the rightmost block; there are no towers to the right that contribute signal. Since each block has $2$ broadcasting towers, the size of the dominating set we've constructed is $\left \lceil \frac{2(n+r-1)}{4t-2r-1} \right \rceil$. Thus, $\gamma_{t,r} (S_{2,n}) \leq \left \lceil \frac{2(n+r-1)}{4t-2r-1} \right \rceil$. 

We now show there does not exist a dominating set with less than $\left \lceil \frac{2(n+r-1)}{4t-2r-1} \right \rceil$ towers. We cannot move the first tower any further away from the first column, because then vertices in the first column would have insufficient reception. We also cannot move the towers further away from each other, since we showed in Theorem \ref{slant_2xn_start_block} that the starting block is efficient; if the towers were further apart, vertices between the towers that have reception exactly $r$ would then have insufficient reception. Therefore, there is no way to use fewer towers than are used in our dominating set, so we can conclude \[\gamma_{t,r} (S_{2,n}) = \left \lceil \frac{2(n+r-1)}{4t-2r-1} \right \rceil. \qedhere\]
\end{proof}

\subsection{King's grid graph and lattice}

A king's lattice is a graph that models the possible moves a king can make on an infinite chessboard. A king's grid graph $K_{m,n}$ can be defined as an $m \times n$ grid graph where each vertex is connected to all of its diagonally adjacent vertices. Just like grid graphs, $m$ and $n$ will denote the number of rows and columns of the finite subgraph of the king's lattice, respectively.

The following lemma gives us a general starting block for a king's grid graph of arbitrary finite dimensions.
\begin{lemma} \label{kings_lattice_start_block}
For $t>r$ and $ m \leq 1 + 2(t-r)$, if $n = 4t-3r+1$, then $\gamma_{t,r} (K_{m,n}) = 2$.
\end{lemma}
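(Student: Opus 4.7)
My plan is to prove both inequalities: construct an explicit pair of towers that $(t,r)$-dominates $K_{m,n}$ (upper bound), and rule out a single tower by a columnar-coverage argument (lower bound). The key observation is that distance in the king's grid graph is the Chebyshev distance, so $d((i_1,j_1),(i_2,j_2)) = \max(|i_1-i_2|,|j_1-j_2|)$, which means the row coordinate and the column coordinate compete only through a maximum, not through a sum.

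For the upper bound, the hypothesis $m \leq 2(t-r)+1$ lets me pick a row $r_0$ (for instance, the middle row) with $|i-r_0| \leq t-r$ for every $i \in \{1,\ldots,m\}$. Mirroring the optimal two-tower placement from the proof of Theorem \ref{thm:trpath}, I place both towers on row $r_0$ at columns $a = t-r+1$ and $b = n-(t-r) = 3t-2r+1$. Then for any vertex $(i,j)$ I case-split on $j$: if $|j-a| \leq t-r$, then $d((i,j),(r_0,a)) = \max(|i-r_0|,|j-a|) \leq t-r$, so the tower at $(r_0,a)$ alone sends signal $\geq r$; the case $|j-b| \leq t-r$ is symmetric. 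For the remaining columns, both $|j-a|$ and $|j-b|$ strictly exceed $|i-r_0|$, so both Chebyshev distances collapse to pure column differences, making the total signal at $(i,j)$ equal to the signal column $j$ would receive in the path $P_n$ with towers at columns $a$ and $b$. Theorem \ref{thm:trpath} guarantees this path signal is at least $r$ everywhere when $n = 4t-3r+1$, completing the verification. For the lower bound, a single tower of strength $t$ dominates only vertices at Chebyshev distance at most $t-r$, hence at most $2(t-r)+1$ consecutive columns; since $n = 4t-3r+1 > 2(t-r)+1$ whenever $2t > r$ (automatic for $t>r \geq 1$), no single tower suffices, so $\gamma_{t,r}(K_{m,n}) \geq 2$.

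The step that needs the most care is the case split for the upper bound: specifically, verifying the boundary columns where $|j-a|$ or $|j-b|$ equals $t-r$, and the middle columns $\{2t-2r+2,\ldots,2t-r\}$ where both towers contribute and the path-sum equals exactly $r$. Both reduce to bookkeeping once one notes that whenever $|j-c| > |i-r_0|$ for a tower at column $c$, the Chebyshev signal coincides exactly with the 1D path signal, so the path-domination argument transports to $K_{m,n}$ without any loss of signal. A small side check, that $a,b \in \{1,\ldots,n\}$, follows immediately from $t \geq r+1$ and $n = 4t-3r+1$.
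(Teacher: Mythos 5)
Your argument is correct, but it proceeds quite differently from the paper's. The paper proves this lemma by a double induction (first on $t$ with $r=1$ fixed, then on $r$), arguing informally that each increment of $t$ lets the towers drift apart by a total of $4$ columns and each increment of $r$ pulls them together by $3$; it never writes down the Chebyshev metric explicitly in this proof and it does not separately argue that one tower is insufficient. You instead give a direct verification: the identity $d\bigl((i_1,j_1),(i_2,j_2)\bigr)=\max(|i_1-i_2|,|j_1-j_2|)$ (which the paper only records later, as Lemma \ref{fig:distance}, and only as an inequality) reduces everything to a one-dimensional computation along the columns, since $m\le 2(t-r)+1$ makes the row coordinate irrelevant to the maximum outside the two ``solo-coverage'' bands. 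The reduction to the $P_n$ analysis of Theorem \ref{thm:trpath} is legitimate: for $n=4t-3r+1$ the only columns not covered by a single tower are $j\in\{2t-2r+2,\dots,2t-r\}$, where both Chebyshev distances collapse to column differences and the two contributions sum to exactly $r$. Your approach buys three things over the paper's: an explicit lower bound (one tower reaches at most $2(t-r)+1$ columns with signal $\ge r$, and $4t-3r+1>2(t-r)+1$), a non-inductive and fully checkable case analysis, and, incidentally, the correct second tower column $b=n-(t-r)=3t-2r+1$ --- the paper's accompanying Remark places it at $n-(t-r+1)$, which is off by one (it would leave column $n$ with signal only $r-1$) and is inconsistent with the paper's own Figure \ref{fig:2_1_lattice_example}, whereas your placement matches that figure. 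The one step you flagged as needing care, the boundary and middle columns, does work out exactly as you describe.
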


\begin{remark}
 The first broadcasting tower is at ($t-r+1$, $\left \lceil \frac{m}{2} \right \rceil$), and the second broadcasting tower is at ($n - (t-r)$, $\left \lceil \frac{m}{2} \right \rceil$).
\end{remark}

\begin{proof}
Assume that a starting block has $n=4t-3r+1$ columns. We will induct on $t$, holding $r$ and $m$ fixed, using $(t,r)=(2,1)$ as our base case. Figure \ref{fig:2_1_lattice_example} shows the efficient dominating pattern for $(t,r)=(2,1)$ on $K_{3,n}$. The equality holds, as we have a starting block of $n=4(2)-3(1)+1=6$ columns. Assume $n=4t-3(1)+1 = 4t-2$ holds. Now we will show it holds for $n=4(t+1)-2=4t+2$. If we increase $t$ to $t+1$, overlapped vertices that previously had reception $r$ now have reception $r+3$. To maintain efficiency, we move the broadcasting towers further to the right. We shift the first broadcast vertex one column to the right, so that vertices in the first column have reception $r$; however, vertices between the towers now have $r+4$ reception instead of $r$. So, we must shift the second tower even further to the right. Shifting $4$ columns to the right returns vertices with $r+4$ reception to reception $r$, These observations show that the number of columns in our starting block will expand by $4$ columns.

We will now induct on $r$. Let $(t,r)=(t,1)$ be our base case. Assume $n=4t-3r+1$ holds for $r$, Now we must show $n=4t-3(r+1)+1=4t-3r-2$ holds. Increasing $r$ to $r+1$ will require us to move our first tower one column to the left, so that vertices in the first columns have reception $r$. Vertices between the broadcasting towers that had reception $r$ now have reception $r-1$, so the second broadcasting tower must be shifted $2$ columns to the left. This results in a starting block being reduced by a total of $3$ columns, giving us $n=4t-3r-2$.
\end{proof}

\begin{figure}[htb!]
    \centering
     \begin{tikzpicture}[scale=.5]
\draw[thick, black] (0,0)--(10,0);
\draw[thick, black] (0,2)--(10,2);
\draw[thick, black] (0,4)--(10,4);

\draw[thick, black] (0,0)--(0,4);
\draw[thick, black] (2,4)--(2,0);
\draw[thick, black] (4,4)--(4,0);
\draw[thick, black] (6,4)--(6,0);
\draw[thick, black] (8,4)--(8,0);
\draw[thick, black] (10,4)--(10,0);

\draw[thick, black] (0,0)--(2,2)--(4,4)--(6,2)--(8,0)--(10,2);
\draw[thick, black] (0,4)--(2,2)--(4,0)--(6,2)--(8,4)--(10,2);
\draw[thick, black] (0,2)--(2,4)--(4,2)--(6,0)--(8,2)--(10,4);
\draw[thick, black] (0,2)--(2,0)--(4,2)--(6,4)--(8,2)--(10,0);
\fill[black] (0,0) circle (3pt);
\fill[black] (2,0) circle (3pt);
\fill[black] (4,0) circle (3pt);
\fill[black] (6,0) circle (3pt);
\fill[black] (8,0) circle (3pt);
\fill[black] (10,0) circle (3pt);

\fill[black] (0,2) circle (3pt);
\fill[red] (2,2) circle (7pt); 
\fill[black] (4,2) circle (3pt);
\fill[black] (6,2) circle (3pt);
\fill[blue] (8,2) circle (7pt); 

\fill[black] (10,2) circle (3pt);

\fill[black] (0,4) circle (3pt);
\fill[black] (2,4) circle (3pt);
\fill[black] (4,4) circle (3pt);
\fill[black] (6,4) circle (3pt);
\fill[black] (8,4) circle (3pt);
\fill[black] (10,4) circle (3pt);

\end{tikzpicture}
    \caption{Dominating pattern for $(2,1)$ on $K_{3,n}$.}
    \label{fig:2_1_lattice_example}
\end{figure}
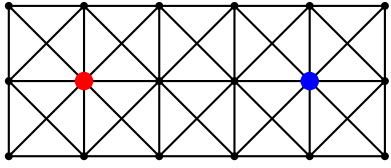

Using Lemma \ref{kings_lattice_start_block}, we find an explicit formula for the domination number on the general finite king's grid graph.
\kingmxn*
\begin{proof}
We first construct the dominating set on the king's grid graph. All towers are placed on row $\left \lceil \frac{m}{2} \right \rceil$, which ensures that the vertices in the rows above and below will have sufficient reception. We place the first broadcasting tower in the $t-r+1$ column, so that all vertices in the first column have reception $r$. We then place the second broadcasting tower so that we've constructed a starting block, as detailed in Lemma \ref{kings_lattice_start_block}. We note that in this construction, the second broadcasting tower is placed on the $2t-r^{th}$ column from the first broadcasting tower. We want to break the rest of the grid graph into these starting blocks, but we must consider the $r-1$ columns of overlap between the second broadcasting tower and the first broadcasting tower of the next block. As in Lemma \ref{vertex_overlap}, we must add $r-1$ to the number of columns $n$ to account for the overlap between the starting blocks. Therefore,
\[\gamma_{t,r}(K_{m,n}) \leq \left \lceil \frac{n+ r-1}{2t-r} \right \rceil.\] 

Now we show that our dominating set or minimum cardinality. We can't move the first tower any farther into the grid graph, since vertices in the first column have reception $r$. We also cannot move towers any further apart, since the starting block is efficient. We cannot move starting blocks farther apart, because the shortest path between the second tower of a block and the first tower of the next block, where each vertex is on a new column, requires $r-1$ vertices of overlap, which we showed in Lemma \ref{vertex_overlap}. Therefore, 
\[\gamma_{t,r}(K_{m,n}) = \left \lceil \frac{n+r-1}{2t-r} \right \rceil. \qedhere \]
\end{proof}

We now provide results on the infinite king's lattice. Here we discuss the spread of a single tower, which we use to find the efficient broadcasting domination patterns for $(t,r) \in \{(t,1),(t,2)\}.$

\begin{remark} \label{kings_broadcast_zone}
The broadcast zone for a tower of strength $t$ in the king's lattice has dimensions $2t-1 \times 2t-1$. The spread of the broadcast zone can be seen in Figure \ref{fig:kings_lattice_spread}.

\end{remark}

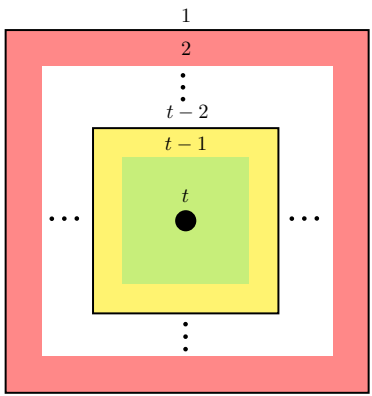
\begin{figure}[htb!]
    \centering
    
\tikzset{every picture/.style={line width=0.75pt}} 

\begin{tikzpicture}[x=0.75pt,y=0.75pt,yscale=-1,xscale=1]

\draw  [fill={rgb, 255:red, 255; green, 0; blue, 0 }  ,fill opacity=0.47 ] (196,48.58) -- (379.08,48.58) -- (379.08,231.67) -- (196,231.67) -- cycle ;
\draw  [draw opacity=0][fill={rgb, 255:red, 255; green, 255; blue, 255 }  ,fill opacity=1 ] (214.38,66.96) -- (360.71,66.96) -- (360.71,213.29) -- (214.38,213.29) -- cycle ;
\draw  [fill={rgb, 255:red, 255; green, 235; blue, 7 }  ,fill opacity=0.58 ] (240.08,98.08) -- (333.58,98.08) -- (333.58,191.58) -- (240.08,191.58) -- cycle ;
\draw  [draw opacity=0][fill={rgb, 255:red, 137; green, 233; blue, 134 }  ,fill opacity=0.47 ] (254.96,112.96) -- (318.71,112.96) -- (318.71,176.71) -- (254.96,176.71) -- cycle ;
\draw  [fill={rgb, 255:red, 0; green, 0; blue, 0 }  ,fill opacity=1 ] (282,144.83) .. controls (282,142.16) and (284.16,140) .. (286.83,140) .. controls (289.5,140) and (291.67,142.16) .. (291.67,144.83) .. controls (291.67,147.5) and (289.5,149.67) .. (286.83,149.67) .. controls (284.16,149.67) and (282,147.5) .. (282,144.83) -- cycle ;
\draw  [fill={rgb, 255:red, 0; green, 0; blue, 0 }  ,fill opacity=1 ] (219.32,144.46) .. controls (218.96,144.47) and (218.66,144.19) .. (218.64,143.83) .. controls (218.63,143.47) and (218.91,143.17) .. (219.27,143.16) .. controls (219.63,143.14) and (219.93,143.42) .. (219.95,143.78) .. controls (219.96,144.14) and (219.68,144.44) .. (219.32,144.46) -- cycle ;
\draw  [fill={rgb, 255:red, 0; green, 0; blue, 0 }  ,fill opacity=1 ] (225.62,144.4) .. controls (225.26,144.42) and (224.96,144.14) .. (224.95,143.78) .. controls (224.93,143.42) and (225.21,143.12) .. (225.57,143.1) .. controls (225.93,143.09) and (226.23,143.37) .. (226.25,143.73) .. controls (226.26,144.09) and (225.98,144.39) .. (225.62,144.4) -- cycle ;
\draw  [fill={rgb, 255:red, 0; green, 0; blue, 0 }  ,fill opacity=1 ] (231.93,144.35) .. controls (231.57,144.37) and (231.26,144.09) .. (231.25,143.73) .. controls (231.23,143.37) and (231.51,143.06) .. (231.87,143.05) .. controls (232.23,143.03) and (232.54,143.31) .. (232.55,143.67) .. controls (232.57,144.03) and (232.29,144.34) .. (231.93,144.35) -- cycle ;
\draw  [fill={rgb, 255:red, 0; green, 0; blue, 0 }  ,fill opacity=1 ] (340.37,144.41) .. controls (340.01,144.43) and (339.7,144.15) .. (339.69,143.79) .. controls (339.67,143.43) and (339.95,143.13) .. (340.31,143.11) .. controls (340.67,143.1) and (340.98,143.38) .. (340.99,143.74) .. controls (341.01,144.1) and (340.73,144.4) .. (340.37,144.41) -- cycle ;
\draw  [fill={rgb, 255:red, 0; green, 0; blue, 0 }  ,fill opacity=1 ] (346.67,144.36) .. controls (346.31,144.37) and (346.01,144.1) .. (345.99,143.74) .. controls (345.98,143.38) and (346.25,143.07) .. (346.61,143.06) .. controls (346.97,143.04) and (347.28,143.32) .. (347.29,143.68) .. controls (347.31,144.04) and (347.03,144.34) .. (346.67,144.36) -- cycle ;
\draw  [fill={rgb, 255:red, 0; green, 0; blue, 0 }  ,fill opacity=1 ] (352.97,144.31) .. controls (352.61,144.32) and (352.31,144.04) .. (352.29,143.68) .. controls (352.28,143.32) and (352.56,143.02) .. (352.92,143) .. controls (353.28,142.99) and (353.58,143.27) .. (353.6,143.63) .. controls (353.61,143.99) and (353.33,144.29) .. (352.97,144.31) -- cycle ;
\draw  [fill={rgb, 255:red, 0; green, 0; blue, 0 }  ,fill opacity=1 ] (287.4,209.61) .. controls (287.41,209.97) and (287.13,210.27) .. (286.77,210.28) .. controls (286.41,210.29) and (286.11,210.01) .. (286.09,209.65) .. controls (286.08,209.29) and (286.36,208.99) .. (286.72,208.98) .. controls (287.08,208.96) and (287.38,209.25) .. (287.4,209.61) -- cycle ;
\draw  [fill={rgb, 255:red, 0; green, 0; blue, 0 }  ,fill opacity=1 ] (287.39,203.3) .. controls (287.4,203.66) and (287.12,203.96) .. (286.76,203.98) .. controls (286.4,203.99) and (286.1,203.71) .. (286.09,203.35) .. controls (286.08,202.99) and (286.36,202.68) .. (286.72,202.67) .. controls (287.08,202.66) and (287.38,202.94) .. (287.39,203.3) -- cycle ;
\draw  [fill={rgb, 255:red, 0; green, 0; blue, 0 }  ,fill opacity=1 ] (287.39,197) .. controls (287.4,197.36) and (287.12,197.66) .. (286.76,197.67) .. controls (286.4,197.68) and (286.1,197.4) .. (286.08,197.04) .. controls (286.07,196.68) and (286.35,196.38) .. (286.71,196.37) .. controls (287.07,196.36) and (287.37,196.64) .. (287.39,197) -- cycle ;
\draw  [fill={rgb, 255:red, 0; green, 0; blue, 0 }  ,fill opacity=1 ] (284.8,71.49) .. controls (284.78,71.13) and (285.05,70.82) .. (285.41,70.8) .. controls (285.77,70.78) and (286.08,71.06) .. (286.1,71.42) .. controls (286.12,71.78) and (285.85,72.08) .. (285.49,72.11) .. controls (285.13,72.13) and (284.82,71.85) .. (284.8,71.49) -- cycle ;
\draw  [fill={rgb, 255:red, 0; green, 0; blue, 0 }  ,fill opacity=1 ] (284.8,77.49) .. controls (284.78,77.13) and (285.05,76.82) .. (285.41,76.8) .. controls (285.77,76.78) and (286.08,77.06) .. (286.1,77.42) .. controls (286.12,77.78) and (285.85,78.08) .. (285.49,78.11) .. controls (285.13,78.13) and (284.82,77.85) .. (284.8,77.49) -- cycle ;
\draw  [fill={rgb, 255:red, 0; green, 0; blue, 0 }  ,fill opacity=1 ] (284.8,83.49) .. controls (284.78,83.13) and (285.05,82.82) .. (285.41,82.8) .. controls (285.77,82.78) and (286.08,83.06) .. (286.1,83.42) .. controls (286.12,83.78) and (285.85,84.08) .. (285.49,84.11) .. controls (285.13,84.13) and (284.82,83.85) .. (284.8,83.49) -- cycle ;

\draw (286.67,132) node [scale=0.7]  {$t$};
\draw (287,106) node [scale=0.7]  {$t-1$};
\draw (287.71,89.86) node [scale=0.7]  {$t-2$};
\draw (287,58) node [scale=0.7]  {$2$};
\draw (287,41) node [scale=0.7]  {$1$};

\end{tikzpicture}

    
    
    \caption{Spread of a tower in the king's lattice.}
    \label{fig:kings_lattice_spread}
\end{figure}
 Now we define a method for determining the shortest path between two vertices in the king's lattice. The following lemma is used to develop our results for the efficient broadcasting domination patterns for $(t,1)$ and $(t,2)$. We let $d_K((x,y), (p,q))$ denote the distance between two points in the king's lattice.
\begin{lemma}\label{fig:distance}
The distance between two points $(x,y)$ and a point $(p,q)$ in the king's lattice is $d_K((x,y),(p,q)) = \max(|p-x|,|q-y|)$.
\end{lemma}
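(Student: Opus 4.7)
The plan is to prove the inequality by explicitly constructing a king's-lattice path from $(x,y)$ to $(p,q)$ whose length equals $\max(|p-x|,|q-y|)$. Since $d_K$ is the graph distance, exhibiting any such walk immediately gives the upper bound.

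First I would reduce to a canonical case by symmetry. The king's lattice is invariant under the reflections $(a,b) \mapsto (-a,b)$ and $(a,b) \mapsto (a,-b)$, so without loss of generality I may assume $p \geq x$ and $q \geq y$. Set $a = p - x = |p - x|$ and $b = q - y = |q - y|$, and assume further (again WLOG, by swapping roles of the coordinates) that $a \geq b$, so that $\max(|p-x|,|q-y|) = a$.

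Next I would build the path in two stages. In the first stage, take $b$ diagonal steps, producing the sequence of vertices
\[
(x,y),\ (x+1,y+1),\ (x+2,y+2),\ \ldots,\ (x+b, y+b) = (x+b, q).
\]
Each consecutive pair differs by $(1,1)$, and diagonal neighbors are adjacent in the king's lattice by definition. In the second stage, take $a - b$ horizontal steps
\[
(x+b,q),\ (x+b+1,q),\ \ldots,\ (x+a, q) = (p,q),
\]
each of which is an ordinary grid edge and hence also an edge of the king's lattice. Concatenating these two stages gives a walk of total length $b + (a - b) = a = \max(|p-x|,|q-y|)$ from $(x,y)$ to $(p,q)$.

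Since the graph distance $d_K$ is the minimum length over all walks joining the two vertices, the existence of this walk yields $d_K((x,y),(p,q)) \leq \max(|p-x|,|q-y|)$, as claimed. There is no real obstacle here: the only thing to be careful about is verifying that both move types (diagonal and axis-aligned) are genuinely edges of the king's lattice, which is exactly the defining property recalled at the start of the subsection. Equality in fact holds (any single edge changes each coordinate by at most $1$, so each step decreases $\max(|p-x|,|q-y|)$ by at most $1$), but since the lemma only asserts the upper bound, the constructive argument above suffices.
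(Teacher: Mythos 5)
Your proposal is correct and follows essentially the same route as the paper: build a walk using $\min(|p-x|,|q-y|)$ diagonal steps followed by $\max(|p-x|,|q-y|)-\min(|p-x|,|q-y|)$ axis-aligned steps, for a total length of $\max(|p-x|,|q-y|)$. Your write-up is in fact a cleaner rendering of the paper's case analysis, since the symmetry reduction lets you avoid splitting into the three cases the paper enumerates.
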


\begin{proof}
First consider Figure \ref{fig:king1x1} when dealing with a $1\times 1$ section of the king's lattice. The shortest path would be across the red dotted path, which is of length $1$. The other distance between these vertices would be $2$, which is achieved by going along the blue dashed path.
\begin{figure}[htb!]
    \centering
\resizebox{1in}{!}{

\tikzset{every picture/.style={line width=0.75pt}} 

\begin{tikzpicture}[x=0.75pt,y=0.75pt,yscale=-1,xscale=1]

\draw [line width=1.5]    (213.67,66.83) -- (360,213.17) ;

\draw  [color={rgb, 255:red, 0; green, 0; blue, 0 }  ,draw opacity=1 ][line width=1.5]  (213.67,66.83) -- (360,66.83) -- (360,213.17) -- (213.67,213.17) -- cycle ;
\draw [color={rgb, 255:red, 74; green, 144; blue, 226 }  ,draw opacity=1, dashed][line width=4]    (213.67,213.17) -- (360,213.17) -- (360,66.83) ;

\draw [color={rgb, 255:red, 0; green, 0; blue, 0 }  ,draw opacity=1][line width=1.5]    (360,66.83) -- (213.67,213.17);

\draw [color={rgb, 255:red, 243; green, 21; blue, 21 }  ,draw opacity=1 , dotted][line width=4]    (360,66.83) -- (213.67,213.17) ;

\draw  [fill={rgb, 255:red, 0; green, 0; blue, 0 }  ,fill opacity=1 ] (207.42,213.17) .. controls (207.42,209.71) and (210.21,206.92) .. (213.67,206.92) .. controls (217.12,206.92) and (219.92,209.71) .. (219.92,213.17) .. controls (219.92,216.62) and (217.12,219.42) .. (213.67,219.42) .. controls (210.21,219.42) and (207.42,216.62) .. (207.42,213.17) -- cycle ;
\draw  [fill={rgb, 255:red, 0; green, 0; blue, 0 }  ,fill opacity=1 ] (353.75,213.17) .. controls (353.75,209.71) and (356.55,206.92) .. (360,206.92) .. controls (363.45,206.92) and (366.25,209.71) .. (366.25,213.17) .. controls (366.25,216.62) and (363.45,219.42) .. (360,219.42) .. controls (356.55,219.42) and (353.75,216.62) .. (353.75,213.17) -- cycle ;
\draw  [fill={rgb, 255:red, 0; green, 0; blue, 0 }  ,fill opacity=1 ] (207.42,66.83) .. controls (207.42,63.38) and (210.21,60.58) .. (213.67,60.58) .. controls (217.12,60.58) and (219.92,63.38) .. (219.92,66.83) .. controls (219.92,70.29) and (217.12,73.08) .. (213.67,73.08) .. controls (210.21,73.08) and (207.42,70.29) .. (207.42,66.83) -- cycle ;
\draw  [fill={rgb, 255:red, 0; green, 0; blue, 0 }  ,fill opacity=1 ] (353.75,66.83) .. controls (353.75,63.38) and (356.55,60.58) .. (360,60.58) .. controls (363.45,60.58) and (366.25,63.38) .. (366.25,66.83) .. controls (366.25,70.29) and (363.45,73.08) .. (360,73.08) .. controls (356.55,73.08) and (353.75,70.29) .. (353.75,66.83) -- cycle ;

\draw (289,120) node   {\Large{$1$}};
\draw (289,228) node   {\Large{$1$}};
\draw (372,130) node   {\Large{$1$}};

\end{tikzpicture}}
    \caption{$K_{1\times1}$}
    \label{fig:king1x1}
\end{figure}
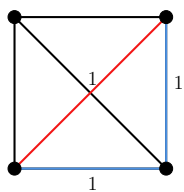

Without loss of generality, assume $p \geq x, q \geq y$, and $|p-x| \leq |q-y|$. Recall that $d_K((x,y),(p,q))$ is the length of the shortest path  between the points $(x,y)$ and $(p,q)$ in the king's lattice. A path can be constructed between two points by using as many diagonals as possible, and then moving strictly vertically the remainder of the way. The maximum number of diagonals that can be used in the path from $(x,y)$ to $(p,q)$, is $|p-x|$; otherwise the path passes the column $q$. The remaining vertical distance to the point $(p,q)$ is then $|q-y| - |p-x|$, and the final path is length $|q-y|$. If a shorter path existed, it could not be formed of only diagonal and vertical steps, or else it would again be length $|q-y|$. However, it would then need to use horizontal steps, which would then need to be accompanied by vertical steps to reach row $q$. However, using a diagonal step is shorter than a horizontal and then vertical step, so a shorter path cannot exist. 
\end{proof}

\begin{figure}[htb!]
    \centering
\tikzset{every picture/.style={line width=0.75pt}} 

\begin{tikzpicture}[x=0.75pt,y=0.75pt,yscale=-1,xscale=1]

\draw  [fill={rgb, 255:red, 0; green, 0; blue, 0 }  ,fill opacity=1 ] (134.5,67.38) .. controls (134.5,65.51) and (136.01,64) .. (137.88,64) .. controls (139.74,64) and (141.25,65.51) .. (141.25,67.38) .. controls (141.25,69.24) and (139.74,70.75) .. (137.88,70.75) .. controls (136.01,70.75) and (134.5,69.24) .. (134.5,67.38) -- cycle ;
\draw  [fill={rgb, 255:red, 0; green, 0; blue, 0 }  ,fill opacity=1 ] (55,137.88) .. controls (55,136.01) and (56.51,134.5) .. (58.38,134.5) .. controls (60.24,134.5) and (61.75,136.01) .. (61.75,137.88) .. controls (61.75,139.74) and (60.24,141.25) .. (58.38,141.25) .. controls (56.51,141.25) and (55,139.74) .. (55,137.88) -- cycle ;
\draw    (137.88,67.38) -- (58.38,137.88) ;

\draw   (56,148) .. controls (56,152.67) and (58.33,155) .. (63,155) -- (89.75,155) .. controls (96.42,155) and (99.75,157.33) .. (99.75,162) .. controls (99.75,157.33) and (103.08,155) .. (109.75,155)(106.75,155) -- (136.5,155) .. controls (141.17,155) and (143.5,152.67) .. (143.5,148) ;
\draw   (156.5,140) .. controls (161.17,140.07) and (163.53,137.77) .. (163.6,133.1) -- (163.88,112.85) .. controls (163.97,106.18) and (166.35,102.88) .. (171.02,102.95) .. controls (166.35,102.88) and (164.07,99.52) .. (164.16,92.86)(164.12,95.86) -- (164.41,75.1) .. controls (164.47,70.43) and (162.17,68.07) .. (157.51,68) ;
\draw  [fill={rgb, 255:red, 0; green, 0; blue, 0 }  ,fill opacity=1 ] (353.5,71.38) .. controls (353.5,69.51) and (355.01,68) .. (356.88,68) .. controls (358.74,68) and (360.25,69.51) .. (360.25,71.38) .. controls (360.25,73.24) and (358.74,74.75) .. (356.88,74.75) .. controls (355.01,74.75) and (353.5,73.24) .. (353.5,71.38) -- cycle ;
\draw  [fill={rgb, 255:red, 0; green, 0; blue, 0 }  ,fill opacity=1 ] (269,138.88) .. controls (269,137.01) and (270.51,135.5) .. (272.38,135.5) .. controls (274.24,135.5) and (275.75,137.01) .. (275.75,138.88) .. controls (275.75,140.74) and (274.24,142.25) .. (272.38,142.25) .. controls (270.51,142.25) and (269,140.74) .. (269,138.88) -- cycle ;
\draw    (356.88,71.38) -- (309.5,71) -- (272.38,138.88) ;

\draw   (270,149) .. controls (270,153.67) and (272.33,156) .. (277,156) -- (303.75,156) .. controls (310.42,156) and (313.75,158.33) .. (313.75,163) .. controls (313.75,158.33) and (317.08,156) .. (323.75,156)(320.75,156) -- (350.5,156) .. controls (355.17,156) and (357.5,153.67) .. (357.5,149) ;
\draw   (370.5,141) .. controls (375.17,141.07) and (377.53,138.77) .. (377.6,134.1) -- (377.88,113.85) .. controls (377.97,107.18) and (380.35,103.88) .. (385.02,103.95) .. controls (380.35,103.88) and (378.07,100.52) .. (378.16,93.86)(378.12,96.86) -- (378.41,76.1) .. controls (378.47,71.43) and (376.17,69.07) .. (371.51,69) ;
\draw  [fill={rgb, 255:red, 0; green, 0; blue, 0 }  ,fill opacity=1 ] (306.13,71.38) .. controls (306.13,69.51) and (307.64,68) .. (309.5,68) .. controls (311.36,68) and (312.88,69.51) .. (312.88,71.38) .. controls (312.88,73.24) and (311.36,74.75) .. (309.5,74.75) .. controls (307.64,74.75) and (306.13,73.24) .. (306.13,71.38) -- cycle ;
\draw  [fill={rgb, 255:red, 0; green, 0; blue, 0 }  ,fill opacity=1 ] (564.13,112.63) .. controls (564.13,110.76) and (565.64,109.25) .. (567.5,109.25) .. controls (569.36,109.25) and (570.88,110.76) .. (570.88,112.63) .. controls (570.88,114.49) and (569.36,116) .. (567.5,116) .. controls (565.64,116) and (564.13,114.49) .. (564.13,112.63) -- cycle ;
\draw  [fill={rgb, 255:red, 0; green, 0; blue, 0 }  ,fill opacity=1 ] (485,138.88) .. controls (485,137.01) and (486.51,135.5) .. (488.38,135.5) .. controls (490.24,135.5) and (491.75,137.01) .. (491.75,138.88) .. controls (491.75,140.74) and (490.24,142.25) .. (488.38,142.25) .. controls (486.51,142.25) and (485,140.74) .. (485,138.88) -- cycle ;
\draw    (566.5,113) -- (487.38,140.88) ;

\draw   (486,149) .. controls (486,153.67) and (488.33,156) .. (493,156) -- (519.75,156) .. controls (526.42,156) and (529.75,158.33) .. (529.75,163) .. controls (529.75,158.33) and (533.08,156) .. (539.75,156)(536.75,156) -- (566.5,156) .. controls (571.17,156) and (573.5,153.67) .. (573.5,149) ;
\draw   (586.5,141) .. controls (591.17,141.07) and (593.53,138.77) .. (593.6,134.1) -- (593.88,113.85) .. controls (593.97,107.18) and (596.35,103.88) .. (601.02,103.95) .. controls (596.35,103.88) and (594.07,100.52) .. (594.16,93.86)(594.12,96.86) -- (594.41,76.1) .. controls (594.47,71.43) and (592.17,69.07) .. (587.51,69) ;
\draw    (569.5,70) -- (568.5,112) ;

\draw  [fill={rgb, 255:red, 0; green, 0; blue, 0 }  ,fill opacity=1 ] (566.13,70) .. controls (566.13,68.14) and (567.64,66.63) .. (569.5,66.63) .. controls (571.36,66.63) and (572.88,68.14) .. (572.88,70) .. controls (572.88,71.86) and (571.36,73.38) .. (569.5,73.38) .. controls (567.64,73.38) and (566.13,71.86) .. (566.13,70) -- cycle ;

\draw (100,172) node   {$| p-x| $};
\draw (205,103) node   {$| q-y| $};
\draw (140,48) node   {$( x,y)$};
\draw (31,138) node   {$( p,q)$};
\draw (314,173) node   {$| p-x| $};
\draw (413,104) node   {$| q-y| $};
\draw (354,49) node   {$( x,y)$};
\draw (248,139) node   {$( p,q)$};
\draw (530,173) node   {$| p-x| $};
\draw (635,104) node   {$| q-y| $};
\draw (570,49) node   {$( x,y)$};
\draw (461,139) node   {$( p,q)$};
\draw (98,205) node   {$( 1)$};
\draw (314,205) node   {$( 2)$};
\draw (530,206) node   {$( 3)$};

\end{tikzpicture}
    \caption{Different cases for distance from two points.}
    \label{fig:shortestpathk}
\end{figure}
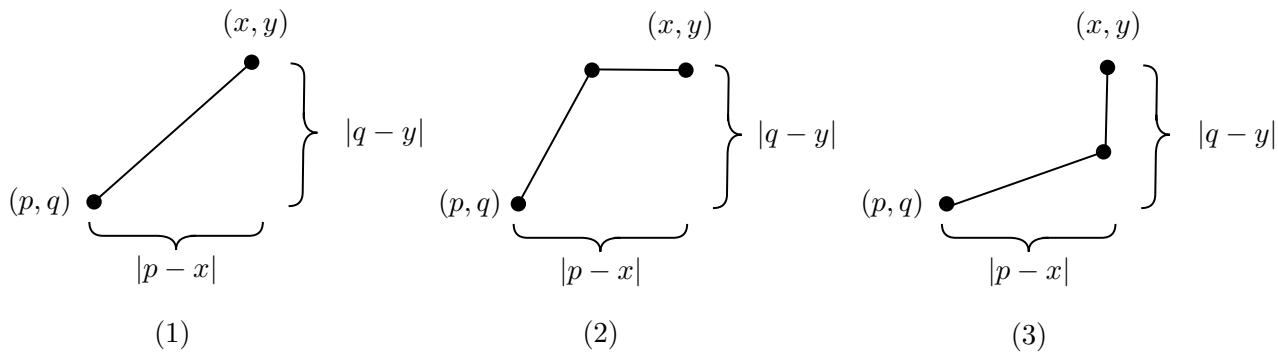

We now find an efficient dominating pattern on the king's lattice for varying $t$ and $r=1$.
\begin{theorem}
Let $t>1$ and $r=1$. Then an efficient $(t,1)$ broadcast domination pattern for the infinite king's lattice is given by placing towers at every vertex of the form
\[
v_{x,y}=(x(2t-1),y(2t-1)),
\]
where $x,y \in \ZZ$.
\end{theorem}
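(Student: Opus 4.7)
The plan is to verify that the given set of towers forms a $(t,1)$ broadcast dominating set in which no two broadcast zones overlap; efficiency will then follow vacuously, since there are no overlapped vertices at which to check reception. First, using Lemma \ref{fig:distance}, for any vertex $(p,q)$ and any tower $v_{x,y}=(x(2t-1),y(2t-1))$ one has
\[
d_K\bigl((p,q),\,v_{x,y}\bigr)=\max\bigl(|p-x(2t-1)|,\,|q-y(2t-1)|\bigr).
\]
Combining this with Remark \ref{kings_broadcast_zone}, the broadcast zone of $v_{x,y}$ is precisely the $(2t-1)\times(2t-1)$ square of lattice points at king's distance at most $t-1$ from $v_{x,y}$, i.e.\ those $(p,q)$ with $|p-x(2t-1)|\leq t-1$ and $|q-y(2t-1)|\leq t-1$.

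Next, I would argue by a division-with-remainder decomposition on each coordinate. The set $\{-(t-1),-(t-2),\ldots,0,\ldots,t-2,t-1\}$ has cardinality $2t-1$, and its elements are pairwise distinct modulo $2t-1$, so it is a complete residue system. Consequently any integer $p$ can be written uniquely as $p=x(2t-1)+p'$ with $x\in\ZZ$ and $|p'|\leq t-1$. Applying this to both coordinates of an arbitrary vertex $(p,q)$ produces a unique pair $(x,y)\in\ZZ^2$ satisfying $\max(|p-x(2t-1)|,\,|q-y(2t-1)|)\leq t-1$.

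From this uniqueness statement I would read off two conclusions simultaneously: every vertex of the king's lattice lies in some broadcast zone (so the pattern is a dominating set), and no vertex lies in two distinct broadcast zones (so there is no overlap). Since a vertex in the broadcast zone of $v_{x,y}$ is at king's distance at most $t-1$ from $v_{x,y}$, it receives signal at least $t-(t-1)=1=r$, verifying the $(t,1)$ broadcast condition. Efficiency is then immediate, because the overlap condition in the definition of an efficient broadcast is vacuous. There is no serious obstacle: the only nontrivial step is packaging existence (covering the lattice) and uniqueness (no overlap) into a single complete-residue-system decomposition, and the rest is bookkeeping using the $\max$-formula for king's distance from Lemma \ref{fig:distance}.
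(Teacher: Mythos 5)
Your proof is correct, but it takes a genuinely different route from the paper's. The paper fixes a fundamental domain bounded by four towers $v_0,\dots,v_3$, splits it into four sub-squares, and verifies in each sub-square that an arbitrary point $(a,b)$ is within king's distance $t-1$ of the nearest tower by computing $d_K$ at the extreme corners; it never explicitly addresses overlap or why the pattern is \emph{efficient}. You instead observe that $\{-(t-1),\dots,t-1\}$ is a complete residue system modulo $2t-1$, so every integer $p$ has a unique representation $p = x(2t-1)+p'$ with $|p'|\leq t-1$; applied coordinatewise this gives, in one stroke, both that every vertex lies in some broadcast zone (domination with $r=1$) and that it lies in exactly one (no overlap, so efficiency is vacuous under the paper's definition). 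This is cleaner and strictly more informative than the paper's case analysis, and it avoids the paper's implicit reliance on checking only corner points of each sub-region. One small point of care: Lemma \ref{fig:distance} as stated only gives the upper bound $d_K\leq\max(|p-x|,|q-y|)$, not equality, so your claim that the broadcast zone is \emph{precisely} the $(2t-1)\times(2t-1)$ square needs the lower bound $d_K\geq\max(|p-x|,|q-y|)$ as well (each king move changes each coordinate by at most $1$); you correctly lean on Remark \ref{kings_broadcast_zone} for this, but it is worth stating the lower bound explicitly since the no-overlap half of your argument depends on it.
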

\begin{proof}
We must show that placing towers at each of the vertices of $(x(2t-1),y(2t-1))$ dominates the infinite king's lattice efficiently for $(t,1)$. We do so by using Remark \ref{kings_broadcast_zone} and by calculating distances from vertices to towers.  In Figure
\ref{fig:(t,1)for K_infty}, each green square represents the broadcast zone of each of the towers, which are represented by the black nodes. The green shading represents all the vertices within the broadcast zone that receive sufficient signal. Since we tile the 
lattice using these towers' broadcast zones and the tiling is uniform throughout the lattice, we can focus our attention to the following $4$ towers placed at positions
\begin{align*}
& v_0 \text{ at } (0,0) \text{ where } x=0,y=0, \\
& v_1 \text{ at } (2t-1,0) \text{ where } x=1,y=0,\\
& v_3 \text{ at } (0, 2t-1) \text{ where } x=0,y=1,\\
& v_2 \text{ at } (2t-1,2t-1) \text{ where } x=1,y=1.\\
\end{align*}
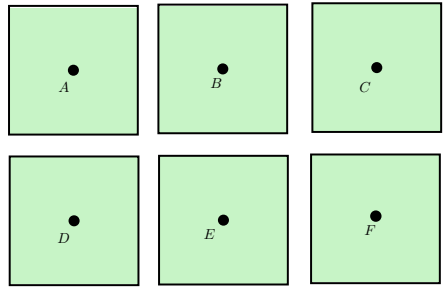
\begin{figure}[htb!]
    \centering
\tikzset{every picture/.style={line width=0.75pt}} 
\[
\begin{tikzpicture}[x=0.75pt,y=0.75pt,yscale=-1,xscale=1]

\draw  [draw opacity=0][fill={rgb, 255:red, 137; green, 233; blue, 134 }  ,fill opacity=0.47 ] (134.94,168) -- (198.69,168) -- (198.69,231.75) -- (134.94,231.75) -- cycle ;
\draw  [color={rgb, 255:red, 0; green, 0; blue, 0 }  ,draw opacity=1 ][fill={rgb, 255:red, 0; green, 0; blue, 0 }  ,fill opacity=1 ] (164.57,199.36) .. controls (164.57,198.09) and (165.59,197.07) .. (166.86,197.07) .. controls (168.12,197.07) and (169.14,198.09) .. (169.14,199.36) .. controls (169.14,200.62) and (168.12,201.64) .. (166.86,201.64) .. controls (165.59,201.64) and (164.57,200.62) .. (164.57,199.36) -- cycle ;
\draw  [color={rgb, 255:red, 0; green, 0; blue, 0 }  ,draw opacity=1 ] (134.38,166.88) -- (199.33,166.88) -- (199.33,231.83) -- (134.38,231.83) -- cycle ;
\draw  [draw opacity=0][fill={rgb, 255:red, 137; green, 233; blue, 134 }  ,fill opacity=0.47 ] (210.27,167) -- (274.52,167) -- (274.52,231.25) -- (210.27,231.25) -- cycle ;
\draw  [color={rgb, 255:red, 0; green, 0; blue, 0 }  ,draw opacity=1 ][fill={rgb, 255:red, 0; green, 0; blue, 0 }  ,fill opacity=1 ] (239.9,198.69) .. controls (239.9,197.43) and (240.93,196.4) .. (242.19,196.4) .. controls (243.45,196.4) and (244.48,197.43) .. (244.48,198.69) .. controls (244.48,199.95) and (243.45,200.98) .. (242.19,200.98) .. controls (240.93,200.98) and (239.9,199.95) .. (239.9,198.69) -- cycle ;
\draw  [color={rgb, 255:red, 0; green, 0; blue, 0 }  ,draw opacity=1 ] (209.72,166.22) -- (274.66,166.22) -- (274.66,231.16) -- (209.72,231.16) -- cycle ;
\draw  [draw opacity=0][fill={rgb, 255:red, 137; green, 233; blue, 134 }  ,fill opacity=0.47 ] (287.88,166.11) -- (352.25,166.11) -- (352.25,230.48) -- (287.88,230.48) -- cycle ;
\draw  [color={rgb, 255:red, 0; green, 0; blue, 0 }  ,draw opacity=1 ][fill={rgb, 255:red, 0; green, 0; blue, 0 }  ,fill opacity=1 ] (317.57,198.02) .. controls (317.57,196.76) and (318.59,195.74) .. (319.86,195.74) .. controls (321.12,195.74) and (322.14,196.76) .. (322.14,198.02) .. controls (322.14,199.29) and (321.12,200.31) .. (319.86,200.31) .. controls (318.59,200.31) and (317.57,199.29) .. (317.57,198.02) -- cycle ;
\draw  [color={rgb, 255:red, 0; green, 0; blue, 0 }  ,draw opacity=1 ] (287.38,165.55) -- (352.33,165.55) -- (352.33,230.5) -- (287.38,230.5) -- cycle ;
\draw  [draw opacity=0][fill={rgb, 255:red, 137; green, 233; blue, 134 }  ,fill opacity=0.47 ] (135.25,242.98) -- (199.55,242.98) -- (199.55,307.27) -- (135.25,307.27) -- cycle ;
\draw  [color={rgb, 255:red, 0; green, 0; blue, 0 }  ,draw opacity=1 ][fill={rgb, 255:red, 0; green, 0; blue, 0 }  ,fill opacity=1 ] (164.9,275.36) .. controls (164.9,274.09) and (165.93,273.07) .. (167.19,273.07) .. controls (168.45,273.07) and (169.48,274.09) .. (169.48,275.36) .. controls (169.48,276.62) and (168.45,277.64) .. (167.19,277.64) .. controls (165.93,277.64) and (164.9,276.62) .. (164.9,275.36) -- cycle ;
\draw  [color={rgb, 255:red, 0; green, 0; blue, 0 }  ,draw opacity=1 ] (134.72,242.88) -- (199.66,242.88) -- (199.66,307.83) -- (134.72,307.83) -- cycle ;
\draw  [draw opacity=0][fill={rgb, 255:red, 137; green, 233; blue, 134 }  ,fill opacity=0.47 ] (209.69,242.75) -- (274.44,242.75) -- (274.44,307.5) -- (209.69,307.5) -- cycle ;
\draw  [color={rgb, 255:red, 0; green, 0; blue, 0 }  ,draw opacity=1 ][fill={rgb, 255:red, 0; green, 0; blue, 0 }  ,fill opacity=1 ] (240.24,275.02) .. controls (240.24,273.76) and (241.26,272.74) .. (242.52,272.74) .. controls (243.79,272.74) and (244.81,273.76) .. (244.81,275.02) .. controls (244.81,276.29) and (243.79,277.31) .. (242.52,277.31) .. controls (241.26,277.31) and (240.24,276.29) .. (240.24,275.02) -- cycle ;
\draw  [color={rgb, 255:red, 0; green, 0; blue, 0 }  ,draw opacity=1 ] (210.05,242.55) -- (275,242.55) -- (275,307.5) -- (210.05,307.5) -- cycle ;
\draw  [draw opacity=0][fill={rgb, 255:red, 137; green, 233; blue, 134 }  ,fill opacity=0.47 ] (287.55,242.44) -- (351.25,242.44) -- (351.25,306.14) -- (287.55,306.14) -- cycle ;
\draw  [color={rgb, 255:red, 0; green, 0; blue, 0 }  ,draw opacity=1 ][fill={rgb, 255:red, 0; green, 0; blue, 0 }  ,fill opacity=1 ] (317.01,272.99) .. controls (317.01,274.3) and (318.07,275.36) .. (319.38,275.36) .. controls (320.69,275.36) and (321.75,274.3) .. (321.75,272.99) .. controls (321.75,271.68) and (320.69,270.62) .. (319.38,270.62) .. controls (318.07,270.62) and (317.01,271.68) .. (317.01,272.99) -- cycle ;
\draw  [color={rgb, 255:red, 0; green, 0; blue, 0 }  ,draw opacity=1 ] (286.72,241.88) -- (351.66,241.88) -- (351.66,306.83) -- (286.72,306.83) -- cycle ;

\draw (162,208) node [scale=.75]  {$v_3$};
\draw (239,206) node [scale=.75]  {$v_2$};
\draw (162,284) node [scale=.75]  {$v_0$};
\draw (235.5,282) node [scale=.75]  {$v_1$};

\end{tikzpicture}\]
\caption{Plotting $6$ towers in king's lattice for $(t,1)$}
    \label{fig:(t,1)for K_infty}
\end{figure}

Additional towers are placed to illustrate the tiling of the lattice. 
We confirm that this is an efficient broadcast domination pattern. Note that the distance between any two broadcast towers is at least $2t-1$, with $v_0, v_2, v_2$ and $v_3$ all being exactly distance $2t - 1$ from each other. From Remark \ref{kings_broadcast_zone}, the broadcast zone of a tower is $2t-1 \times 2t-1$, so there cannot be any vertices that do not receive signal from at least one broadcast tower. For efficiency, without loss of generality consider $(x,y)$ in the broadcast zone of $v_0$. From Remark \ref{kings_broadcast_zone}, $d_K((x,y), v_0) \leq t - 1$. We then have

\begin{align*}
d_K((x,y), v_1) &\geq |d_K((x,y), v_0)) - d(v_0, v_1)| \geq 2t - 1 - (t-1) = t.\\
\end{align*}
\end{proof}

We can repeat this for $v_2$ and $v_3$ to conclude that if $(x,y)$ is in a broadcast zone, then it is only in that broadcast zone. As no overlap is required for $(t,1)$ domination, our broadcast domination pattern is efficient. 

  
We have the following result for varying $t$ and $r=2$ on the king's Lattice.
\begin{theorem}
Let $t>1$ and $r = 2$. Then an efficient $(t,2)$ broadcasting domination pattern for the infinite king's lattice is given by placing towers at every vertex of the form
\[
((2t-r)x-y, x+(2t-r)y)
\]
with $x,y \in \ZZ$.
\end{theorem}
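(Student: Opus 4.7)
My plan is to mimic the strategy of the preceding $(t,1)$ theorem and exploit translation symmetry of the tower pattern together with the fact that king's distance is Chebyshev distance (Lemma \ref{fig:distance}). Writing $\alpha_1 = (2t-2, 1)$ and $\alpha_2 = (-1, 2t-2)$, the tower set is $\{x\alpha_1 + y\alpha_2 : x, y \in \mathbb{Z}\}$. Since $\alpha_1 \cdot \alpha_2 = 0$ and $|\alpha_1| = |\alpha_2|$, this is a rotated square lattice, so by translation invariance it suffices to analyze the configuration around a single tower $v_0 = (0,0)$. The four nearest tower-neighbors of $v_0$ are $\pm \alpha_1$ and $\pm \alpha_2$, each at king's distance $2(t-1)$, while the four diagonal neighbors $\pm \alpha_1 \pm \alpha_2$ are at king's distance $2t-1$; all other towers are farther.

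By Remark \ref{kings_broadcast_zone}, the broadcast zone of a tower is a $(2t-1) \times (2t-1)$ axis-aligned square of integer points. A direct computation gives that the broadcast zones of $v_0$ and $\alpha_1$ intersect in the set $\{(t-1, b) : 2-t \leq b \leq t-1\}$, a line of $2t-2$ vertices each at Chebyshev distance exactly $t-1$ from both $v_0$ and $\alpha_1$, and therefore receiving signal $1 + 1 = 2 = r$ from those two towers. By the $90^\circ$ symmetry of the configuration, the three remaining pairwise overlaps of $v_0$'s broadcast zone with those of $-\alpha_1, \alpha_2, -\alpha_2$ are analogous lines of $2t-2$ vertices running along the other three sides of $v_0$'s zone. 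The diagonal neighbors contribute nothing since their king's distance to $v_0$ is $2t-1 > 2(t-1)$. One further checks the four overlap lines are pairwise disjoint (for instance, the corner $(t-1, t-1)$ is not in $\alpha_2$'s zone because $d_K((t-1,t-1), \alpha_2) = t$), so no vertex lies in three or more broadcast zones; this gives the efficiency claim.

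What remains is to verify every integer vertex is covered by at least one broadcast zone. For this I would use a counting argument: the fundamental parallelogram of the tower lattice has area $|\det(\alpha_1, \alpha_2)| = (2t-2)^2 + 1$ and therefore contains exactly $(2t-2)^2 + 1$ integer points. Each broadcast zone contains $(2t-1)^2$ integer points, of which $8(t-1) = 4 \cdot (2t-2)$ are shared in pairs with the four nearest-neighbor towers. Attributing half of each overlap vertex to each sharing tower gives an effective count of
\[
(2t-1)^2 - \tfrac{1}{2} \cdot 8(t-1) = 4t^2 - 8t + 5 = (2t-2)^2 + 1
\]
integer points per tower, matching the fundamental-cell count exactly. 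Since the identified overlaps are only pairwise, this equality leaves no room for any uncovered integer vertex; thus the broadcast zones tile $\mathbb{Z}^2$ with exactly the overlaps already described. Interior vertices of $v_0$'s broadcast zone lie at Chebyshev distance $\leq t-2$ from $v_0$ and so receive signal $\geq 2$ from $v_0$ alone, while every boundary vertex receives signal exactly $2$ via one overlap edge.

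The main obstacle is the last step: ruling out gaps in the tiling from the counting alone. The identity above is exact, so the overlap enumeration must also be exhaustive. I would back this up by verifying that no tower beyond the eight closest neighbors can contribute to $v_0$'s zone (the next-closest lattice vectors such as $\pm 2\alpha_1, \pm 2\alpha_2$ have king's distance $\geq 2(2t-2) > 2(t-1)$ from $v_0$) and that every one of the $8(t-1)$ boundary vertices of $v_0$'s zone really does lie on one of the four overlap lines, which is a direct count since $4 \cdot (2t-2) = 8(t-1)$ matches the boundary size exactly.
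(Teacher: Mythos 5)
Your proposal is correct, and while it shares the paper's geometric setup (towers forming a rotated square lattice, $(2t-1)\times(2t-1)$ square broadcast zones as in Remark \ref{kings_broadcast_zone}, with overlaps confined to zone boundaries), it closes the argument in a genuinely different way. The paper's proof proceeds pointwise: it asserts from the figure that every vertex lies within distance $t-1$ of some tower, and that each vertex at distance exactly $t-1$ from one tower lies on the border of exactly one adjacent tower's zone; neither assertion is verified by computation. You instead compute the overlaps explicitly --- the intersection of the zones of $v_0$ and $\alpha_1=(2t-2,1)$ is the segment $\{(t-1,b): 2-t\le b\le t-1\}$ of $2t-2$ points at Chebyshev distance exactly $t-1$ from both towers, and the four such segments partition the $8(t-1)$-point boundary of $v_0$'s zone --- and then establish coverage by a lattice-determinant count: the fundamental cell contains $(2t-2)^2+1$ integer points, which matches $(2t-1)^2-\tfrac12\cdot 8(t-1)$ exactly. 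This counting step is the genuinely new ingredient; it buys you a rigorous proof that no vertex is missed, which the paper simply does not supply. The one load-bearing point to make fully explicit is that the identified overlaps are \emph{exhaustive}: two zones can intersect only if the towers are within Chebyshev distance $2(t-1)$ of each other, and since every lattice vector $m\alpha_1+n\alpha_2$ with $m^2+n^2\ge 2$ has Euclidean length at least $\sqrt{2}\,\sqrt{(2t-2)^2+1}$ and hence Chebyshev norm at least $\sqrt{(2t-2)^2+1}>2t-2$, only $\pm\alpha_1,\pm\alpha_2$ qualify. (Your phrasing about the ``next-closest'' vectors being $\pm2\alpha_1,\pm2\alpha_2$ skips over $\pm\alpha_1\pm\alpha_2$, but you handle those separately, and the norm bound above covers all cases uniformly.) With that in place, the only multiply-covered vertices are the $4(t-1)$ boundary points per cell, each of multiplicity exactly $2$, so the incidence count forces every integer point to be covered and every overlap vertex to receive signal exactly $r=2$, which is both domination and efficiency.
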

\begin{proof}
We must show that placing towers at each of the vertices of $((2t-r)x-y, x+(2t-r)y)$ dominates the infinite king's Lattice efficiently for $(t,2)$. We do so by using Remark \ref{kings_broadcast_zone} and by calculating distances from vertices to towers. Observe in Figure \ref{fig:(t,2) king lattice}, each green overlap represents all the vertices that receive at least signal $2$ and the red border of the each broadcast zone represents the vertices that receive signal $1$. Now, for all the vertices in each of the broadcast zones have at least reception $2$, the borders of each broadcast zone must overlap. In Figure \ref{fig:(t,2) king lattice}, we plot $6$ towers to demonstrate the pattern using $((2t-r)x-y, x+(2t-r)y)$:
\begin{align*}
& A \text{ at } (0,0) \text{ where } x=0,y=0, \\
& B \text{ at } ((2t-r),1) \text{ where } x=1,y=0,\\
& C \text{ at } (2(2t-r),2) \text{ where } x=2,y=0,\\
& D \text{ at } (1,-(2t-r)) \text{ where } x=0,y=-1,\\
& E \text{ at } ((2t-r)+1,1-(2t-r)) \text{ where } x=1,y=-1,\\
& F \text{ at } (2(2t-r),2-(2t-r)) \text{ where } x=2,y=-1.
\end{align*}

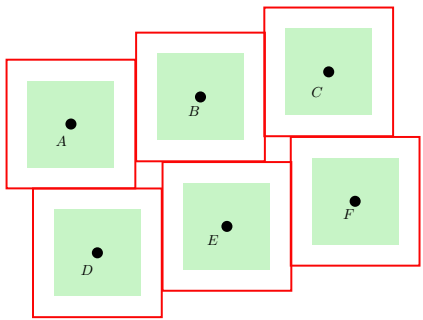
\begin{figure}[htb!]
    \centering
\tikzset{every picture/.style={line width=0.75pt}} 
\[
\begin{tikzpicture}[x=0.75pt,y=0.75pt,yscale=-1,xscale=1]

\draw  [draw opacity=0][fill={rgb, 255:red, 137; green, 233; blue, 134 }  ,fill opacity=0.47 ] (157.94,190.44) -- (201.77,190.44) -- (201.77,234.27) -- (157.94,234.27) -- cycle ;
\draw  [fill={rgb, 255:red, 0; green, 0; blue, 0 }  ,fill opacity=1 ] (177.57,212.36) .. controls (177.57,211.09) and (178.59,210.07) .. (179.86,210.07) .. controls (181.12,210.07) and (182.14,211.09) .. (182.14,212.36) .. controls (182.14,213.62) and (181.12,214.64) .. (179.86,214.64) .. controls (178.59,214.64) and (177.57,213.62) .. (177.57,212.36) -- cycle ;
\draw  [color={rgb, 255:red, 250; green, 6; blue, 6 }  ,draw opacity=1 ] (147.38,179.88) -- (212.33,179.88) -- (212.33,244.83) -- (147.38,244.83) -- cycle ;
\draw  [draw opacity=0][fill={rgb, 255:red, 137; green, 233; blue, 134 }  ,fill opacity=0.47 ] (223.27,176.77) -- (267.11,176.77) -- (267.11,220.61) -- (223.27,220.61) -- cycle ;
\draw  [fill={rgb, 255:red, 0; green, 0; blue, 0 }  ,fill opacity=1 ] (242.9,198.69) .. controls (242.9,197.43) and (243.93,196.4) .. (245.19,196.4) .. controls (246.45,196.4) and (247.48,197.43) .. (247.48,198.69) .. controls (247.48,199.95) and (246.45,200.98) .. (245.19,200.98) .. controls (243.93,200.98) and (242.9,199.95) .. (242.9,198.69) -- cycle ;
\draw  [color={rgb, 255:red, 250; green, 6; blue, 6 }  ,draw opacity=1 ] (212.72,166.22) -- (277.66,166.22) -- (277.66,231.16) -- (212.72,231.16) -- cycle ;
\draw  [draw opacity=0][fill={rgb, 255:red, 137; green, 233; blue, 134 }  ,fill opacity=0.47 ] (287.94,164.11) -- (331.77,164.11) -- (331.77,207.94) -- (287.94,207.94) -- cycle ;
\draw  [fill={rgb, 255:red, 0; green, 0; blue, 0 }  ,fill opacity=1 ] (307.57,186.02) .. controls (307.57,184.76) and (308.59,183.74) .. (309.86,183.74) .. controls (311.12,183.74) and (312.14,184.76) .. (312.14,186.02) .. controls (312.14,187.29) and (311.12,188.31) .. (309.86,188.31) .. controls (308.59,188.31) and (307.57,187.29) .. (307.57,186.02) -- cycle ;
\draw  [color={rgb, 255:red, 250; green, 6; blue, 6 }  ,draw opacity=1 ] (277.38,153.55) -- (342.33,153.55) -- (342.33,218.5) -- (277.38,218.5) -- cycle ;
\draw  [draw opacity=0][fill={rgb, 255:red, 137; green, 233; blue, 134 }  ,fill opacity=0.47 ] (171.27,255.44) -- (215.11,255.44) -- (215.11,299.27) -- (171.27,299.27) -- cycle ;
\draw  [fill={rgb, 255:red, 0; green, 0; blue, 0 }  ,fill opacity=1 ] (190.9,277.36) .. controls (190.9,276.09) and (191.93,275.07) .. (193.19,275.07) .. controls (194.45,275.07) and (195.48,276.09) .. (195.48,277.36) .. controls (195.48,278.62) and (194.45,279.64) .. (193.19,279.64) .. controls (191.93,279.64) and (190.9,278.62) .. (190.9,277.36) -- cycle ;
\draw  [color={rgb, 255:red, 250; green, 6; blue, 6 }  ,draw opacity=1 ] (160.72,244.88) -- (225.66,244.88) -- (225.66,309.83) -- (160.72,309.83) -- cycle ;
\draw  [draw opacity=0][fill={rgb, 255:red, 137; green, 233; blue, 134 }  ,fill opacity=0.47 ] (236.61,242.11) -- (280.44,242.11) -- (280.44,285.94) -- (236.61,285.94) -- cycle ;
\draw  [fill={rgb, 255:red, 0; green, 0; blue, 0 }  ,fill opacity=1 ] (256.24,264.02) .. controls (256.24,262.76) and (257.26,261.74) .. (258.52,261.74) .. controls (259.79,261.74) and (260.81,262.76) .. (260.81,264.02) .. controls (260.81,265.29) and (259.79,266.31) .. (258.52,266.31) .. controls (257.26,266.31) and (256.24,265.29) .. (256.24,264.02) -- cycle ;
\draw  [color={rgb, 255:red, 250; green, 6; blue, 6 }  ,draw opacity=1 ] (226.05,231.55) -- (291,231.55) -- (291,296.5) -- (226.05,296.5) -- cycle ;
\draw  [draw opacity=0][fill={rgb, 255:red, 137; green, 233; blue, 134 }  ,fill opacity=0.47 ] (301.27,229.44) -- (345.11,229.44) -- (345.11,273.27) -- (301.27,273.27) -- cycle ;
\draw  [fill={rgb, 255:red, 0; green, 0; blue, 0 }  ,fill opacity=1 ] (320.9,251.36) .. controls (320.9,250.09) and (321.93,249.07) .. (323.19,249.07) .. controls (324.45,249.07) and (325.48,250.09) .. (325.48,251.36) .. controls (325.48,252.62) and (324.45,253.64) .. (323.19,253.64) .. controls (321.93,253.64) and (320.9,252.62) .. (320.9,251.36) -- cycle ;
\draw  [color={rgb, 255:red, 250; green, 6; blue, 6 }  ,draw opacity=1 ] (290.72,218.88) -- (355.66,218.88) -- (355.66,283.83) -- (290.72,283.83) -- cycle ;

\draw (175,221) node [scale=0.5]  {$A$};
\draw (242,206) node [scale=0.5]  {$B$};
\draw (304,196) node [scale=0.5]  {$C$};
\draw (188,286) node [scale=0.5]  {$D$};
\draw (251.5,271) node [scale=0.5]  {$E$};
\draw (320,258) node [scale=0.5]  {$F$};

\end{tikzpicture}\]
\caption{Plotting $6$ towers in $K_{\infty}$ for $(t,2)$.}
    \label{fig:(t,2) king lattice}
\end{figure}

Now, consider an arbitrary point $(a,b)$ in the king's lattice. If $(a,b)$ is a point and there exists $v$, where $v$ is a tower, such that $d_K(v,(a,b)) \leq t-2$, then by Remark \ref{kings_broadcast_zone} we know that $(a,b)$ receives signal of at least 2 from the vertex $v$. Consider a point $(a,b)$ such that $d_K(v, (a,b)) = t-1$ then $(a,b)$ receives a signal strength of $1$ from the tower at vertex $v$. This means that in order for the vertex $(a,b)$ to receive enough signal it must lie on a second tower's broadcast zone. We claim such a tower exists, and call it $v'$. We must show that $d_K(v',(a,b)) = t-1$.

By symmetry and to simplify the argument we assume $v_0 = (0,0)$. This implies that $v_1, v_2, v_3,$ and $v_4$ are placed at $(-1, 2t-r), (2t-r, 1), (1, -(2t-r)),$ and $(-(2t-r), 1)$, respectively.


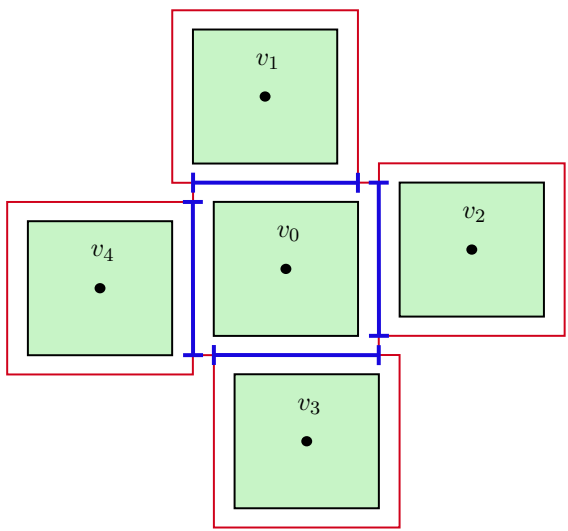
\begin{figure}[htb!]
    \centering
\tikzset{every picture/.style={line width=0.75pt}} 

\begin{tikzpicture}[x=0.75pt,y=0.75pt,yscale=-.75,xscale=.75]

\draw  [color={rgb, 255:red, 0; green, 0; blue, 0 }  ,draw opacity=1 ][fill={rgb, 255:red, 137; green, 233; blue, 134 }  ,fill opacity=0.47 ] (288.55,263.86) -- (385.57,263.86) -- (385.57,354.05) -- (288.55,354.05) -- cycle ;
\draw  [color={rgb, 255:red, 0; green, 0; blue, 0 }  ,draw opacity=1 ][fill={rgb, 255:red, 0; green, 0; blue, 0 }  ,fill opacity=1 ] (334.07,308.96) .. controls (334.07,307.42) and (335.41,306.17) .. (337.06,306.17) .. controls (338.72,306.17) and (340.06,307.42) .. (340.06,308.96) .. controls (340.06,310.49) and (338.72,311.74) .. (337.06,311.74) .. controls (335.41,311.74) and (334.07,310.49) .. (334.07,308.96) -- cycle ;
\draw  [color={rgb, 255:red, 208; green, 2; blue, 27 }  ,draw opacity=1 ] (274.63,250.91) -- (399.5,250.91) -- (399.5,367) -- (274.63,367) -- cycle ;
\draw  [color={rgb, 255:red, 0; green, 0; blue, 0 }  ,draw opacity=1 ][fill={rgb, 255:red, 137; green, 233; blue, 134 }  ,fill opacity=0.47 ] (274.55,147.86) -- (371.57,147.86) -- (371.57,238.05) -- (274.55,238.05) -- cycle ;
\draw  [color={rgb, 255:red, 0; green, 0; blue, 0 }  ,draw opacity=1 ][fill={rgb, 255:red, 0; green, 0; blue, 0 }  ,fill opacity=1 ] (320.07,192.96) .. controls (320.07,191.42) and (321.41,190.17) .. (323.06,190.17) .. controls (324.72,190.17) and (326.06,191.42) .. (326.06,192.96) .. controls (326.06,194.49) and (324.72,195.74) .. (323.06,195.74) .. controls (321.41,195.74) and (320.07,194.49) .. (320.07,192.96) -- cycle ;
\draw  [color={rgb, 255:red, 208; green, 2; blue, 27 }  ,draw opacity=1 ] (260.63,134.91) -- (385.5,134.91) -- (385.5,251) -- (260.63,251) -- cycle ;
\draw  [color={rgb, 255:red, 0; green, 0; blue, 0 }  ,draw opacity=1 ][fill={rgb, 255:red, 137; green, 233; blue, 134 }  ,fill opacity=0.47 ] (260.55,31.86) -- (357.57,31.86) -- (357.57,122.05) -- (260.55,122.05) -- cycle ;
\draw  [color={rgb, 255:red, 0; green, 0; blue, 0 }  ,draw opacity=1 ][fill={rgb, 255:red, 0; green, 0; blue, 0 }  ,fill opacity=1 ] (306.07,76.96) .. controls (306.07,75.42) and (307.41,74.17) .. (309.06,74.17) .. controls (310.72,74.17) and (312.06,75.42) .. (312.06,76.96) .. controls (312.06,78.49) and (310.72,79.74) .. (309.06,79.74) .. controls (307.41,79.74) and (306.07,78.49) .. (306.07,76.96) -- cycle ;
\draw  [color={rgb, 255:red, 208; green, 2; blue, 27 }  ,draw opacity=1 ] (246.63,18.91) -- (371.5,18.91) -- (371.5,135) -- (246.63,135) -- cycle ;
\draw  [color={rgb, 255:red, 0; green, 0; blue, 0 }  ,draw opacity=1 ][fill={rgb, 255:red, 137; green, 233; blue, 134 }  ,fill opacity=0.47 ] (399.55,134.86) -- (496.57,134.86) -- (496.57,225.05) -- (399.55,225.05) -- cycle ;
\draw  [color={rgb, 255:red, 0; green, 0; blue, 0 }  ,draw opacity=1 ][fill={rgb, 255:red, 0; green, 0; blue, 0 }  ,fill opacity=1 ] (445.07,179.96) .. controls (445.07,178.42) and (446.41,177.17) .. (448.06,177.17) .. controls (449.72,177.17) and (451.06,178.42) .. (451.06,179.96) .. controls (451.06,181.49) and (449.72,182.74) .. (448.06,182.74) .. controls (446.41,182.74) and (445.07,181.49) .. (445.07,179.96) -- cycle ;
\draw  [color={rgb, 255:red, 208; green, 2; blue, 27 }  ,draw opacity=1 ] (385.63,121.91) -- (510.5,121.91) -- (510.5,238) -- (385.63,238) -- cycle ;
\draw  [color={rgb, 255:red, 0; green, 0; blue, 0 }  ,draw opacity=1 ][fill={rgb, 255:red, 137; green, 233; blue, 134 }  ,fill opacity=0.47 ] (149.55,160.86) -- (246.57,160.86) -- (246.57,251.05) -- (149.55,251.05) -- cycle ;
\draw  [color={rgb, 255:red, 0; green, 0; blue, 0 }  ,draw opacity=1 ][fill={rgb, 255:red, 0; green, 0; blue, 0 }  ,fill opacity=1 ] (195.07,205.96) .. controls (195.07,204.42) and (196.41,203.17) .. (198.06,203.17) .. controls (199.72,203.17) and (201.06,204.42) .. (201.06,205.96) .. controls (201.06,207.49) and (199.72,208.74) .. (198.06,208.74) .. controls (196.41,208.74) and (195.07,207.49) .. (195.07,205.96) -- cycle ;
\draw  [color={rgb, 255:red, 208; green, 2; blue, 27 }  ,draw opacity=1 ] (135.63,147.91) -- (260.5,147.91) -- (260.5,264) -- (135.63,264) -- cycle ;
\draw [color={rgb, 255:red, 24; green, 8; blue, 221 }  ,draw opacity=1 ][line width=1.5]    (260.5,147.91) -- (260.63,251) ;
\draw [shift={(260.63,251)}, rotate = 269.93] [color={rgb, 255:red, 24; green, 8; blue, 221 }  ,draw opacity=1 ][line width=1.5]    (0,6.71) -- (0,-6.71)   ;
\draw [shift={(260.5,147.91)}, rotate = 269.93] [color={rgb, 255:red, 24; green, 8; blue, 221 }  ,draw opacity=1 ][line width=1.5]    (0,6.71) -- (0,-6.71)   ;
\draw [color={rgb, 255:red, 24; green, 8; blue, 221 }  ,draw opacity=1 ][line width=1.5]    (385.5,251) -- (274.63,250.91) ;
\draw [shift={(274.63,250.91)}, rotate = 360.05] [color={rgb, 255:red, 24; green, 8; blue, 221 }  ,draw opacity=1 ][line width=1.5]    (0,6.71) -- (0,-6.71)   ;
\draw [shift={(385.5,251)}, rotate = 360.05] [color={rgb, 255:red, 24; green, 8; blue, 221 }  ,draw opacity=1 ][line width=1.5]    (0,6.71) -- (0,-6.71)   ;
\draw [color={rgb, 255:red, 24; green, 8; blue, 221 }  ,draw opacity=1 ][line width=1.5]    (371.5,135) -- (260.63,134.91) ;
\draw [shift={(260.63,134.91)}, rotate = 360.05] [color={rgb, 255:red, 24; green, 8; blue, 221 }  ,draw opacity=1 ][line width=1.5]    (0,6.71) -- (0,-6.71)   ;
\draw [shift={(371.5,135)}, rotate = 360.05] [color={rgb, 255:red, 24; green, 8; blue, 221 }  ,draw opacity=1 ][line width=1.5]    (0,6.71) -- (0,-6.71)   ;
\draw [color={rgb, 255:red, 24; green, 8; blue, 221 }  ,draw opacity=1 ][line width=1.5]    (385.5,134.91) -- (385.63,238) ;
\draw [shift={(385.63,238)}, rotate = 269.93] [color={rgb, 255:red, 24; green, 8; blue, 221 }  ,draw opacity=1 ][line width=1.5]    (0,6.71) -- (0,-6.71)   ;
\draw [shift={(385.5,134.91)}, rotate = 269.93] [color={rgb, 255:red, 24; green, 8; blue, 221 }  ,draw opacity=1 ][line width=1.5]    (0,6.71) -- (0,-6.71)   ;

\draw (338.92,285.15) node [scale=0.9]  {$v_{3}$};
\draw (324.92,169.15) node [scale=0.9]  {$v_{0}$};
\draw (310.92,53.15) node [scale=0.9]  {$v_{1}$};
\draw (449.92,156.15) node [scale=0.9]  {$v_{2}$};
\draw (199.92,182.15) node [scale=0.9]  {$v_{4}$};

\end{tikzpicture}
    \caption{Five towers in king's lattice.}
    \label{fig:5towers}
\end{figure}

Consider Figure \ref{fig:5towers}. Let $(a,b)$ be a point where $d_K(v_0,(a,b)) = t-1$. Then, we know that $(a,b)$ lies on the border of the broadcast zone for $v_0$. If this is the case then there exists a unique tower $v'$ where $d_K(v',(a,b)) = t-1$. Assume without loss of generality, that $v_4$ is the tower of distance $t-1$ from $(a,b)$. This means that $(a,b)$ lies on the border of the broadcast zone of $v_4$. Then $(a,b)$ lies on the blue line between $v_0$ and $v_4$. Then it follows that $(a,b)$ will receive 1 signal from tower $v_0$ and tower $v_4$, which will give $(a,b)$ a total reception of 2, since $(a,b)$ is not distance $t-1$ from any other towers broadcast zone. 
\end{proof}

\section{Open Problems}
We now present several open questions on the $(t,r)$ broadcast domination number of other graphs, including cycles and trees, which arose naturally in the course of our work.

We begin with a corollary to Theorem \ref{thm:trpath}. 

\begin{corollary} \label{cor:cycle}
Let $C_n$ be a cycle on $n$ vertices. Then \[\gamma_{t,r}(C_n) \leq\left \lceil \frac{n+r-1}{2t-r} \right \rceil.\]
\end{corollary}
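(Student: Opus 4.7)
The plan is to obtain this upper bound by transporting a $(t,r)$ broadcast dominating set from $P_n$ to $C_n$. Observe that $C_n$ is obtained from $P_n$ by adding a single edge between the two endpoints, so $V(C_n) = V(P_n)$ and for any two vertices $u,v$ we have $d_{C_n}(u,v) \leq d_{P_n}(u,v)$, since any shortest path in $P_n$ is still available in $C_n$.

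First, I would take $D \subseteq V(P_n)$ to be an optimal $(t,r)$ broadcast dominating set of $P_n$, so $|D| = \lceil (n+r-1)/(2t-r) \rceil$ by Theorem \ref{thm:trpath}. Next, I would verify that $D$, viewed as a subset of $V(C_n)$, is still a $(t,r)$ broadcast dominating set. For any vertex $v \in V(C_n)$, the signal received is
\[
f_{C_n}(v) = \sum_{w \in D} \max\{0,\, t - d_{C_n}(v,w)\} \;\geq\; \sum_{w \in D} \max\{0,\, t - d_{P_n}(v,w)\} = f_{P_n}(v) \geq r,
\]
where the middle inequality uses the distance comparison above and the monotonicity of $x \mapsto \max\{0, t-x\}$.

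Therefore $D$ is a $(t,r)$ broadcast dominating set for $C_n$, and so $\gamma_{t,r}(C_n) \leq |D| = \lceil (n+r-1)/(2t-r) \rceil$, as claimed. There is no real obstacle here: the only subtle point is the distance comparison between the cycle and the path, which is an immediate consequence of the fact that adding edges cannot increase graph distances. The argument does not give equality in general because the extra edge can create genuinely shorter $(t,r)$ broadcasts by exploiting the wrap-around, which is why the statement is only an inequality.
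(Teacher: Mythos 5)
Your proposal is correct and is essentially the argument the paper intends: the corollary is stated as an immediate consequence of Theorem \ref{thm:trpath}, obtained by viewing an optimal $(t,r)$ broadcast of $P_n$ as a broadcast of $C_n$, where the added edge can only decrease distances and hence only increase received signal. Your write-up makes this implicit reasoning explicit and is complete.
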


In light of this result it is natural to ask the following.

\begin{question}
Let $C_n$ be a cycle on $n$ vertices. Can the upper bound \[\gamma_{t,r}(C_n) \leq\left \lceil \frac{n+r-1}{2t-r} \right \rceil\] given by Corollary \ref{cor:cycle} be improved?
\end{question}
 
This question was answered by Herrman and van Hintum in \cite{HERRMAN2021270}.
 
Theorem \ref{thm:trpath} allows us to give an upper bound for the $(t,r)$ broadcast domination number of trees using path decompositions. We first give the definition of the path decomposition of a graph and then state our result below.

\begin{definition}
A \textbf{path decomposition} of a graph $G$ is a collection $\Psi$ of edge-disjoint subgraphs $H_1, H_2, ...,H_k$ of $G$ such that every edge of $G$ belongs to exactly one $H_i$, and that every $H_i$ is a path. 
\end{definition}

\begin{corollary}
\label{cor:treedecomp}
Let $\Psi$ be a path decomposition of a tree $T$, with $\Psi = \{H_1, H_2, \ldots, H_n\}$. Then \[\gamma_{t,r}(T) \leq \sum_{i=1}^n \gamma_{t,r} (H_i).\]
\end{corollary}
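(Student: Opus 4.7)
The plan is to construct a $(t,r)$ broadcast dominating set for $T$ by taking the union of optimal broadcast dominating sets on each of the paths $H_i$ in the decomposition $\Psi$. Specifically, for each $i$, I would invoke Theorem \ref{thm:trpath} to select $D_i \subseteq V(H_i)$ with $|D_i| = \gamma_{t,r}(H_i)$, and then set $D := \bigcup_{i=1}^n D_i$. The upper bound $|D| \leq \sum_{i=1}^n \gamma_{t,r}(H_i)$ is immediate from the subadditivity of cardinality under unions, so the only real content is to verify that $D$ is a $(t,r)$ broadcast dominating set for $T$.

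To verify this, I would fix an arbitrary vertex $v \in V(T)$. Because $\Psi$ is a path decomposition, every edge of $T$ belongs to some $H_i$, so (assuming $|V(T)| \geq 2$) every vertex of $T$ is contained in at least one $H_i$. Fix such an $H_i$ containing $v$. The critical observation is that $H_i$ is a subgraph of $T$, so shortest paths can only get shorter when we pass from $H_i$ to $T$; that is, $d_T(v,w) \leq d_{H_i}(v,w)$ for every $w \in V(H_i)$. Consequently
\[
\sum_{w \in D} \max\{0, t - d_T(v,w)\} \;\geq\; \sum_{w \in D_i} \max\{0, t - d_T(v,w)\} \;\geq\; \sum_{w \in D_i} \max\{0, t - d_{H_i}(v,w)\} \;\geq\; r,
\]
where the last inequality uses that $D_i$ is a $(t,r)$ broadcast dominating set for $H_i$. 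Hence $v$ receives signal at least $r$ from $D$, and since $v$ was arbitrary, $D$ is a $(t,r)$ broadcast dominating set for $T$.

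The bound $\gamma_{t,r}(T) \leq |D| \leq \sum_{i=1}^n \gamma_{t,r}(H_i)$ then follows by definition of the $(t,r)$ broadcast domination number. The main (mild) obstacle is ensuring no vertex of $T$ is missed by the union; this could fail only for a tree on a single vertex, where the edge decomposition is empty. In that degenerate case the inequality reads $1 \leq 0$ and fails, so the statement should be understood for trees with at least two vertices. Other than this boundary case, the argument is essentially a monotonicity statement about signal reception under subgraph inclusion.
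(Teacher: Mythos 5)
Your proposal is correct and follows essentially the same route as the paper: take a minimal $(t,r)$ broadcast dominating set on each path $H_i$ (guaranteed by Theorem \ref{thm:trpath}) and observe that their union dominates $T$. You supply details the paper leaves implicit — the monotonicity $d_T(v,w)\leq d_{H_i}(v,w)$ under subgraph inclusion and the degenerate single-vertex case — but the underlying argument is the same.
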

\begin{proof}

Let $T$ be a tree with path decomposition $\Psi = \{H_1, H_2, H_3, ...,H_n\}$. Note that by Theorem \ref{thm:trpath} each path $H_i \in \Psi$ has a minimum $(t, r)$ broadcast domination set. By placing such a minimum $(t,r)$ broadcast domination set on each path in $\Psi$, we are able to produce a $(t,r)$ broadcast domination set for $T$ (although not necessarily minimum). Thus, we have that 
\[\gamma_{t,r}(T) \leq \gamma_{t,r}(H_1)+\gamma_{t,r}(H_2)+ \cdots + \gamma_{t,r}(H_n) = \sum_{i=1}^n \gamma_{t,r} (H_i). \qedhere\]
\end{proof}

Just as for cycles, it remains open whether this upper bound can be improved.

\begin{question}
Given a tree $T$. Can the upper bound \[\gamma_{t,r}(T) \leq \sum_{i=1}^n \gamma_{t,r} (H_i)\] given by Corollary \ref{cor:treedecomp} be improved?
\end{question}

In this paper, we established an upper bound for the domination number of any $3$D grid graph as stated in Theorem \ref{thm:upperbound}. We now ask the following.

\begin{question}
Given a $3$D grid graph $G_{m,n,k}$. Can the upper bound $\displaystyle \gamma_{t,r} (G_{m,n,k}) \leq 2 \cdot B$ given in Theorem \ref{thm:upperbound} be improved?
\end{question}

\section{Appendix}\label{ap:algorithm}
Here we present the algorithm for generating the $(t,r)$ dominating set for a path on $n$ vertices. The algorithm is written in SageMath and can be found at: \\ \texttt{https://github.com/nzcrepeau/t\_r\_path\_domination.git}.














\bibliographystyle{plain}
\bibliography{bibliography}

\section{Statements and Declarations}
This work was supported in part by the Alfred P. Sloan Foundation, the Mathematical Sciences Research Institute, and the National Science Foundation (grant. No. DMS-1156499). The authors have no relevant financial or non-financial interests to disclose. All authors contributed to the research and the writing of the manuscript.
\end{document}